\documentclass[reqno]{amsart}
\usepackage{amsfonts}
\usepackage{amssymb}
\usepackage{times}
\usepackage{xcolor}

\def\g2{\hbox{\mathfrak g}_2}
\def\f4{\hbox{\mathfrak f}_4}
\def\e6{\hbox{\mathfrak e}_6}
\def\fs4{\hbox{\mathfrak fs}_4}
\def\F4{\hbox{\mathfrak F}_4}
\def\Fs4{\hbox{\mathfrak Fs}_4}

\def\al{\ifcase\xypolynode\or F \or A\or B\or C\or D\or G\fi}
\def\ala{\ifcase\xypolynode\or a \or b\or c\or d\or g\or f\fi}

\makeatletter
\def\Ddots{\mathinner{\mkern1mu\raise\p@
\vbox{\kern7\p@\hbox{.}}\mkern2mu
\raise4\p@\hbox{.}\mkern2mu\raise7\p@\hbox{.}\mkern1mu}}
\makeatother

{\theoremstyle{plain}%
  \newtheorem{theorem}{Theorem}[section]
  \newtheorem{corollary}{Corollary}[section]
  \newtheorem{proposition}{Proposition}[section]
  \newtheorem{lemma}{Lemma}[section]
  \newtheorem{example}{Example}[section]
  
  \newtheorem{definition}{Definition}[section]

\newtheorem{remark}{Remark}[section]


\newfont{\hueca}{msbm10}
\def\hu #1{\hbox{\hueca #1}}\def\hu #1{\hbox{\hueca #1}}
\begin{document}

\title{On split regular Hom-Lie superalgebras}

\thanks{The first, second and fourth authors acknowledge financial assistance by the Centre for Mathematics of the University of Coimbra -- UID/MAT/00324/2013, funded by the Portuguese Government through FCT/MEC and co-funded by the European Regional Development Fund through the Partnership Agreement PT2020. Third and fourth authors are supported by the PCI of the UCA `Teor\'\i a de Lie y Teor\'\i a de Espacios de Banach', by the PAI with project numbers FQM298, FQM7156 and by the project of the Spanish Ministerio de Educaci\'on y Ciencia  MTM2013-41208P. The fourth author acknowledges the Funda\c{c}\~{a}o para a Ci\^{e}ncia e a Tecnologia for the grant with reference SFRH/BPD/101675/2014.}

\author[H. Albuquerque ]{Helena~Albuquerque}

\address{Helena~Albuquerque, CMUC, Departamento de Matem\'atica, Universidade de Coimbra, Apartado 3008,
3001-454 Coimbra, Portugal. \hspace{0.1cm} {\em E-mail address}: {\tt lena@mat.uc.pt}}

\author[E. Barreiro]{Elisabete~Barreiro}

\address{Elisabete~Barreiro, CMUC, Departamento de Matem\'atica, Universidade de Coimbra, Apartado 3008,
3001-454 Coimbra, Portugal. \hspace{0.1cm} {\em E-mail address}: {\tt mefb@mat.uc.pt}}{}

\author[A.J. Calder\'on]{A.J.~Calder\'on}

\address{A.J.~Calder\'on, Departamento de Matem\'aticas, Universidad de C\'adiz, Campus de Puerto Real, 11510, Puerto Real, C\'adiz, Espa\~na. \hspace{0.1cm} {\em E-mail address}: {\tt ajesus.calderon@uca.es}}{}

\author[Jos\'{e} M. S\'{a}nchez]{Jos\'{e} M. S\'{a}nchez}

\address{Jos\'{e} M. S\'{a}nchez, CMUC, Departamento de Matem\'atica, Universidade de Coimbra, Apartado 3008, 3001-454 Coimbra, Portugal. \hspace{0.1cm} {\em E-mail address}: {\tt txema.sanchez@mat.uc.pt}}

\begin{abstract}
We introduce the class of split regular Hom-Lie superalgebras as the natural extension of the one of split Hom-Lie algebras and Lie superalgebras, and study its structure by showing that an arbitrary split regular Hom-Lie superalgebra ${\frak L}$ is of the form ${\frak L} = U + \sum_j I_j$ with $U$ a linear subspace of a maximal abelian graded subalgebra $H$ and any $I_j$ a well described (split) ideal of ${\frak L}$ satisfying $[I_j,I_k] = 0$ if $j \neq k$. Under certain conditions, the simplicity of ${\frak L}$ is characterized and it is shown that ${\frak L}$ is the direct sum of the family of its simple ideals.

{\it Keywords}: Hom-Lie superalgebra, Root, Root space, Structure theory.

{\it 2000 MSC}: 17A60, 17A70, 17B22.

\end{abstract}

\maketitle

\section{Introduction and first definitions}

The motivation to study Hom-Lie structures are related to physics and to deformations of Lie algebras (see for instance \cite{Maksuper, Arnlind, Asso2, Tong, Makaso, Sheng, Yau}). A Hom-Lie superalgebra is a $\mathbb{Z}_2$-graded space with a bracket for which the super Jacobi identity is twisted by a homomorphism. This class was introduced by Ammar and Makhlouf in \cite{Maksuper}, where they have shown that the supercommutator bracket defined using the multiplication in a Hom-associative superalgebra leads naturally to a Hom-Lie superalgebra. Hom-Lie superalgebras are gene\-ralizations of Lie superalgebras, Lie algebras as well as Hom-Lie algebras, and also they are a particular case of $\Gamma$-graded quasi-Lie algebras introduced by Larsson and Silvestrov in \cite{Origin}, who also have shown its relation with discrete and deformed vector fields and diffe\-rential calculus. We also note that an analogous notion has been considered in other categories like Hom-associative algebras, Hom-alternative algebras,  Hom-Leibniz algebras, etc. (see \cite{Maksuper, Cheng, Amin28, Mak10, Amin34, Mak20, Donald}).

In the present paper we introduce the class of split regular   Hom-Lie superalgebras as the natural extension of the one of split regular Hom-Lie algebras (see \cite{HomLie}) and study its structure presenting them as a direct sum of adequate ideals. We also characterize a simple split regular split Hom-Lie superalgebra which together a relation of equivalence defined in its root system allow us to obtain a final expression as direct sum of simple ideals. Here it is interesting  to mention the recent references \cite{Cao1, Cao2, Cao3, Cao4} where different classes of split  Hom-algebras are also studied  from the viewpoint of their inner structures. 

We briefly describe the content of this paper. In Section 2 we develop connection of roots techniques which becomes the main tool in our study. In Section 3 we apply all of the machinary introduced in the previous section to show that a split regular Hom-Lie superalgebra ${\frak L}$ is of the form ${\frak L} = U + \sum_j I_j$ with $U$ a linear subspace of a maximal abelian graded subalgebra $H$ and any $I_j$ a well described ideal of ${\frak L}$ satisfying $[I_j, I_k]=0$ if $j\neq k$. In the final section, under certain conditions, the simplicity of ${\frak L}$ is characterized and it is shown that ${\frak L}$ is the direct sum of the family of its simple ideals.

\begin{definition}\rm
A {\it Hom-Lie superalgebra} ${\frak L}$ is a ${\hu Z}_2$-graded algebra ${\frak L} = {\frak L}_{\bar 0} \oplus {\frak L}_{\bar 1}$ over a base field ${\hu K}$ endowed with a bilinear product $$[\cdot, \cdot] : {\frak L} \times {\frak L} \to {\frak L}$$ and an even superspace homomorphism $\phi : {\frak L} \to {\frak L},$ satisfying
\begin{equation}
[x,y]=-(-1)^{{\bar i} {\bar j}}[y,x]
\end{equation}
\begin{equation}
(-1)^{{\bar i} {\bar k}}[[x,y], \phi(z)] + (-1)^{{\bar i} {\bar j}}[[y,z],\phi(x)] + (-1)^{{\bar j} {\bar k}}[[z,x],\phi(y)] = 0,
\end{equation}
for all homogeneous elements $x \in {\frak L}_{{\bar i}}, y \in {\frak L}_{\bar j}, z \in {\frak L}_{\bar k},$ with ${\bar i}, {\bar j}, {\bar k} \in {\hu Z}_2,$ called {\it skew-supersymmetry} and {\it super Hom-Jacobi identity}, respectively. Furthermore, if $\phi$ is an superalgebra automorphism ${\frak L}$ is called a {\it regular Hom-Lie superalgebra}.
\end{definition}

\noindent Note that ${\frak L}_{\bar 0}$ is a Hom-Lie algebra called the even or bosonic part of $\frak L$ while ${\frak L}_{\bar 1}$ is called the odd or fermonic part of $\frak L$. The usual  regularity concepts will be understood in the graded sense. A {\it graded subalgebra} $A$ of ${\frak L}$ is a graded subspace $A = A_{\bar 0} \oplus A_{\bar 1}$ of ${\frak L}$ such that $[A,A] \subset A$ and $\phi(A) = A$. An {\it ideal} $I$ of ${\frak L}$ is a graded subalgebra $I=I_{\bar 0} \oplus I_{\bar 1}$ of ${\frak L}$ satisfying $[I,{\frak L}] \subset I.$ A Hom-Lie superalgebra $\frak L$ is called {\it simple} if $[{\frak L},{\frak L}] \neq 0$ and its only (graded) ideals are $\{0\}$ and ${\frak L}$. Throughout this paper we consider regular Hom-Lie superalgebras and denote by $\mathbb{N}_0$ the set of all non-negative integers and by $\mathbb{Z}$ the set of all integers. Finally, we emphasize that the regular Hom-Lie superalgebras ${\frak L}$ are considered of arbitrary dimension and over an arbitrary base field ${\hu K}$.
Let us introduce the class of split algebras in the framework of regular Hom-Lie superalgebras in a similar way to the cases of Hom-Lie algebras, Lie algebras and Lie superalgebras, among other classes of algebras. We begin by considering a maximal abelian graded subalgebra $H = H_{\bar{0}} \oplus H_{\bar{1}}$ among the abelian graded subalgebras of $\frak L.$

\begin{definition}\label{split}\rm
Let $\frak L$ be a regular Hom-Lie superalgebra and denote by $H=H_{\bar 0}\oplus H_{\bar 1}$ a maximal abelian graded subalgebra (MAGSA) of $\frak L$. For a linear functional $\alpha : H_{\bar 0} \to \hu{K},$ we define the {\it root space} of $\frak L$ (with respect to $H$) associated to $\alpha$ as the subspace
$${\frak L}_{\alpha} := \bigl\{v_{\alpha} \in {\frak L} : [h_{\bar 0},v_{\alpha}] = \alpha(h_{\bar 0})\phi(v_{\alpha}) \hspace{0.1cm} {\it for} \hspace{0.1cm} {\it any} \hspace{0.1cm} h_{\bar 0} \in H_{\bar 0}\bigr\}.$$ The elements $\alpha \in (H_{\bar 0})^*$ satisfying ${\frak L}_{\alpha} \neq 0$ are called {\it roots} of ${\frak L}$ (with respect to $H$) and we denote $\Lambda := \{\alpha \in (H_{\bar 0})^* \setminus \{0\}: {\frak L}_{\alpha}\neq 0\}$ the {\it root system} of ${\frak L}.$ We say that ${\frak L}$ is a {\it split regular Hom-Lie superalgebra} (with respect to $H$) if $${\frak L} = H \oplus \Bigl(\bigoplus\limits_{\alpha \in \Lambda}{\frak L}_{\alpha}\Bigr).$$
\end{definition}

\noindent Split regular Hom-Lie algebras, split Lie superalgebras and split Lie algebras are examples of split regular Hom-Lie superalgebras. Hence, the present paper extends the results in \cite{HomLie, YoLi, Nosalg}. Let us now show a proper example.

\begin{example}\label{example1}
Consider the real  ${\mathbb Z}_2$-graded vector space
${\frak L}={\frak L}_{\bar 0} \oplus {\frak L}_{\bar 1}$,  with
basis $$\{e_1,e_2\} \cup \{h_n,x_n,y_n: n \in {\mathbb N}, n \geq 2\} $$ of ${\frak L}_{\bar 0}$ and  $$\{e_3\}\cup \{f_n,g_n: n \in {\mathbb N}, n \geq 2\} $$
of ${\frak L}_{\bar 1};$ and   where the nonzero products on these elements
are induced  by the relations:
$$[e_2,e_1]=e_1, \hspace{0.3cm} [h_n,x_n]=2n^2x_n, \hspace{0.3cm}
[h_n,y_n]=- \frac{2}{n^2}y_n,$$ $$[x_n,y_n]=h_n,\hspace{0.3cm}
[y_n,g_n]=\frac{1}{n}f_n, \hspace{0.3cm}[x_n,f_n]={n}g_n,$$
$$[h_n,f_n]=-\frac{1}{n}f_n,\hspace{0.3cm} [h_n,g_n]={n}g_n,
 \hspace{0.3cm}[g_n,f_n]=h_n,\hspace{0.3cm}  [g_n,g_n]=-2n^2x_n,$$
 $$[f_n,f_n]=\frac{2}{n^2}y_n.$$

  Then by considering the  superspace homomorphism $$\phi: {\frak L} \to {\frak L}$$ defined by

  $$\hbox{$\phi(e_i)=e_i$,  $i \in \{1,2,3\}$ and }$$
  $$\hbox{$\phi(h_n)=h_n$, $\phi(x_n)=n^2 x_n$, $\phi(y_n)=\frac{1}{n^2} y_n$, $\phi(f_n)=\frac{1}{n} f_n$, $\phi(g_n)=n g_n$ }$$
   for any $n \geq 2$,
  we have that
  ${\frak L}={\frak L}_{\bar 0} \oplus {\frak L}_{\bar
 1}$ becomes a, (non-Hom-Lie algebra  and  non-Lie superalgebra), split regular  Hom-Lie superalgebra

 \begin{equation}\label{sep1}
 {\frak L}=H \oplus {\frak L}_{\alpha} \oplus (\bigoplus\limits_{n\in {\mathbb N}, n \geq 2}  {\frak
 L}_{\beta_n})\oplus (\bigoplus\limits_{n\in {\mathbb N}, n \geq 2}  {\frak
 L}_{-\beta_n}) \oplus (\bigoplus\limits_{n\in {\mathbb N}, n \geq 2}  {\frak
 L}_{\gamma_n})\oplus (\bigoplus\limits_{n\in {\mathbb N}, n \geq 2}  {\frak
 L}_{-\gamma_n})
 \end{equation}
 where

 $$H=\langle e_2,h_2,h_3,...,h_n,... \rangle \oplus \langle e_3 \rangle,$$
 $$\hbox{ ${\frak L}_{\alpha}=\langle e_1 \rangle,$
  ${\frak L}_{\beta_n}=\langle x_n
 \rangle$, ${\frak L}_{-\beta_n}=\langle y_n
 \rangle$, ${\frak L}_{\gamma_n}=\langle f_n
 \rangle$ and ${\frak L}_{-\gamma_n}=\langle g_n
 \rangle$},$$ being
 $$\alpha, \beta_n, \gamma_n: H \to {\mathbb R},$$
$ n\in {\mathbb N}, n \geq 2,$
  defined by

    $$\hbox{$\alpha(e_2)= 1,$ and  $\alpha(e_3)= \alpha(h_n)=0,$ for any $n \geq 2$},$$
     $$\hbox{$\beta_n(e_2)= \beta_n(e_3)=0,$  $ \beta_n(h_n)=2,$ and $ \beta_n(h_m)=0$ when $n\neq m$},$$
     $$\hbox{$\gamma_n(e_2)= \gamma_n(e_3)=0,$  $ \gamma_n(h_n)=-1,$ and $ \gamma_n(h_m)=0$ when $n\neq m$}.$$

\end{example}

\bigskip

It is clear that the root space associated to the zero root satisfies $H \subset {\frak L}_0.$ Conversely, given any $v_0\in {\frak L}_0$ we can write $v_0 = h + \sum_{i=1}^n v_{\alpha_i}$ with $h \in H$ and $v_{\alpha_i} \in {\frak L}_{\alpha_i}$ for $i \in \{1,\dots,n\}$, being $\alpha_i \in \Lambda$ with $\alpha_i \neq \alpha_j$ if $i\neq j$. Hence $0 = [h_{\bar 0}, h + \sum_{i=1}^n v_{\alpha_i}] = \sum_{i=1}^n \alpha_i(h_{\bar 0})\phi(v_{\alpha_i})$ for any $h_{\bar 0} \in H_{\bar 0}$. So, taking into account the direct character of the sum, that $\alpha_i \neq 0$ and $\phi$ is an automorphism, we have that any $v_{\alpha_i} = 0$ and then $v_0 \in H$. Consequently $$H = {\frak L}_0.$$

By the grading of ${\frak L},$ any $v_{\alpha} \in {\frak L}_{\alpha}, \alpha \in \Lambda \cup \{0\}$ may be expressed in the form $v_{\alpha} = v_{\alpha,{\bar 0}}+v_{{\alpha},{\bar 1}}$ with $v_{\alpha, {\bar i}} \in {\frak L}_{\bar i}, {\bar i} \in {\hu Z}_2$, then
$$[h_{\bar 0},v_{\alpha, {\bar i}}] = \alpha(h_{\bar 0})\phi(v_{\alpha, {\bar i}})$$ for any $h_{\bar 0} \in H_{\bar 0}$. From here, ${\frak L}_{\alpha} = ({\frak L}_{\alpha} \cap {\frak L}_{\bar 0}) \oplus ({\frak L}_{\alpha} \cap {\frak L}_{\bar 1})$. Hence, by denoting ${\frak L}_{\alpha, {\bar i}} := {\frak L}_{\alpha} \cap {\frak L}_{\bar i},$ for $\bar{i} \in \mathbb{Z}_2,$ we can write
\begin{equation}\label{separa}
{\frak L}_{\alpha} = {\frak L}_{\alpha, {\bar 0}} \oplus {\frak L}_{\alpha, {\bar 1}}
\end{equation}
for any $\alpha \in \Lambda \cup \{0\}$. From the above, $$\hbox{$H_{\bar 0} = {\frak L}_{0,{\bar 0}}$ \hspace{0.2cm} and \hspace{0.2cm} $H_{\bar 1} = {\frak L}_{0,{\bar 1}}$,}$$
and also
$$\hbox{${\frak L}_{\bar 0} = H_{\bar 0} \oplus (\bigoplus\limits_{\alpha \in \Lambda} {\frak L}_{\alpha,{\bar 0}})$ \hspace{0.2cm} and \hspace{0.2cm} ${\frak L}_{\bar 1} = H_{\bar 1} \oplus
(\bigoplus\limits_{\alpha \in \Lambda} {\frak L}_{\alpha,{\bar 1}})$.}$$
Taking into account this expression of ${\frak L}_{\bar 0}$, the direct character of the sum and the fact that $\alpha \neq 0$ for any $\alpha \in \Lambda$, we have that $H_{\bar 0}$ is a MASA of the Hom-Lie algebra ${\frak L}_{\bar 0}$. Hence ${\frak L}_{\bar 0}$ is a split regular Hom-Lie algebra (with respect to $H_{\bar 0}$) (cf. \cite{HomLie}) and ${\frak L}_{\bar 1}$ is (in a sense) a split anti-Lie triple system (with respect to $H_{\bar 0}$) (cf. \cite{Yotriple1}). For an easier notation, the mappings  $\phi|_{H_{\bar 0}}, \phi|_{H_{\bar 0}}^{-1} : H_{\bar 0} \to H_{\bar 0}$ will be denoted by $\phi$ and $\phi^{-1},$ respectively, when there is not possible confusion. The below lemma in an immediate consequence of super Hom-Jacobi identity.

\begin{lemma}\label{lema1}
For any $\alpha, \beta \in \Lambda \cup \{0\}$ the
following assertions hold.
\begin{enumerate}
\item[{\rm 1.}] If $\phi({\frak L}_{\alpha}) \neq 0$ then ${\alpha}\phi^{-1} \in \Lambda \cup \{0\}$ and $\phi({\frak L}_{\alpha}) = {\frak L}_{{\alpha}\phi^{-1}}.$ In a similar way, if $\phi^{-1}({\frak L}_{\alpha}) \neq 0$ then ${\alpha}\phi \in \Lambda \cup \{0\}$ and $\phi^{-1}({\frak L}_{\alpha}) = {\frak L}_{{\alpha}\phi}.$

\item[{\rm 2.}] If $[{\frak L}_{\alpha}, {\frak L}_{\beta}] \neq 0$ then $\alpha \phi^{-1}+\beta \phi^{-1} \in \Lambda \cup \{0\}$ and $[{\frak L}_{\alpha}, {\frak L}_{\beta}] \subset {\frak L}_{\alpha \phi^{-1}+\beta \phi^{-1}}.$
\end{enumerate}
\end{lemma}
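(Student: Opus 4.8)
The plan is to treat the two assertions separately, in each case reducing everything to the defining identity $[h_{\bar 0}, v_{\alpha}] = \alpha(h_{\bar 0})\phi(v_{\alpha})$ of the root spaces, combined with the fact that $\phi$ is an even automorphism (so that $\phi(H_{\bar 0}) = H_{\bar 0}$ and $\phi([x,y]) = [\phi(x),\phi(y)]$ for all $x,y$).

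For the first assertion I would start from a nonzero $v_{\alpha} \in {\frak L}_{\alpha}$ and apply $\phi$ to the identity $[h_{\bar 0}, v_{\alpha}] = \alpha(h_{\bar 0})\phi(v_{\alpha})$, obtaining $[\phi(h_{\bar 0}), \phi(v_{\alpha})] = \alpha(h_{\bar 0})\phi^2(v_{\alpha})$. Replacing $h_{\bar 0}$ by $\phi^{-1}(h_{\bar 0})$ --- legitimate since $\phi$ maps $H_{\bar 0}$ onto itself --- gives $[h_{\bar 0}, \phi(v_{\alpha})] = (\alpha\phi^{-1})(h_{\bar 0})\,\phi(\phi(v_{\alpha}))$ for every $h_{\bar 0} \in H_{\bar 0}$, which is exactly the statement that $\phi(v_{\alpha}) \in {\frak L}_{\alpha\phi^{-1}}$. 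Hence $\phi({\frak L}_{\alpha}) \subset {\frak L}_{\alpha\phi^{-1}}$, and the hypothesis $\phi({\frak L}_{\alpha}) \neq 0$ forces $\alpha\phi^{-1} \in \Lambda \cup \{0\}$. The symmetric computation, applying $\phi^{-1}$ instead of $\phi$, yields $\phi^{-1}({\frak L}_{\gamma}) \subset {\frak L}_{\gamma\phi}$ for any $\gamma$; taking $\gamma = \alpha\phi^{-1}$ and using $(\alpha\phi^{-1})\phi = \alpha$ produces the reverse inclusion ${\frak L}_{\alpha\phi^{-1}} \subset \phi({\frak L}_{\alpha})$, giving the desired equality. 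The assertion involving $\phi^{-1}$ is proved identically with the roles of $\phi$ and $\phi^{-1}$ interchanged.

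For the second assertion the idea is to feed the super Hom-Jacobi identity with $x = v_{\alpha} \in {\frak L}_{\alpha} \cap {\frak L}_{\bar i}$, $y = v_{\beta} \in {\frak L}_{\beta} \cap {\frak L}_{\bar j}$ and $z = \phi^{-1}(h_{\bar 0})$, the point of this last choice being that $\phi(z) = h_{\bar 0}$, so the twisted bracket $[[x,y],\phi(z)]$ becomes $[[v_{\alpha}, v_{\beta}], h_{\bar 0}]$, which by skew-supersymmetry equals $-[h_{\bar 0}, [v_{\alpha}, v_{\beta}]]$. Since $z$ has degree $\bar 0$, the two remaining terms simplify; using $[\phi^{-1}(h_{\bar 0}), v_{\alpha}] = (\alpha\phi^{-1})(h_{\bar 0})\phi(v_{\alpha})$ and the analogue for $v_{\beta}$, together with $\phi([x,y]) = [\phi(x),\phi(y)]$, each of them collapses to a scalar multiple of $\phi([v_{\alpha}, v_{\beta}])$. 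Collecting terms yields $[h_{\bar 0},[v_{\alpha},v_{\beta}]] = (\alpha\phi^{-1} + \beta\phi^{-1})(h_{\bar 0})\,\phi([v_{\alpha},v_{\beta}])$, i.e. $[v_{\alpha}, v_{\beta}] \in {\frak L}_{\alpha\phi^{-1} + \beta\phi^{-1}}$, and the nonvanishing hypothesis then places $\alpha\phi^{-1} + \beta\phi^{-1}$ in $\Lambda \cup \{0\}$.

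The only genuinely delicate point is the sign bookkeeping in the second assertion: the prefactor $(-1)^{\bar i \bar j}$ in the middle Jacobi term must cancel against the sign produced when rewriting $[\phi(v_{\beta}), \phi(v_{\alpha})]$ as $-(-1)^{\bar i \bar j}\phi([v_{\alpha}, v_{\beta}])$ via skew-supersymmetry, and it is precisely this cancellation that makes the two surviving terms add (rather than subtract) into the combined functional $\alpha\phi^{-1} + \beta\phi^{-1}$. Everything else is a direct unwinding of the definitions, so I expect the argument to be short once the substitution $z = \phi^{-1}(h_{\bar 0})$ is in place.
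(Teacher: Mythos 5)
Your proposal is correct and follows essentially the same route as the paper: part 1 by applying $\phi$ (resp. $\phi^{-1}$) to the defining identity and re-parametrizing $h_{\bar 0}$, then combining the two inclusions $\phi({\frak L}_{\alpha})\subset{\frak L}_{\alpha\phi^{-1}}$ and $\phi^{-1}({\frak L}_{\alpha\phi^{-1}})\subset{\frak L}_{\alpha}$ to get equality; part 2 by the super Hom-Jacobi identity with the even element playing the role of $z$ so that all signs cancel into $(\alpha\phi^{-1}+\beta\phi^{-1})(h_{\bar 0})\phi([v_{\alpha},v_{\beta}])$. The only cosmetic difference is that you substitute $z=\phi^{-1}(h_{\bar 0})$ up front while the paper takes $z=h_{\bar 0}$ and renames $h'=\phi(h_{\bar 0})$ afterwards; your sign bookkeeping is the careful version of what the paper leaves implicit.
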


\begin{proof}
1. For any $h_{\bar 0} \in H_{\bar 0}$ and $0 \neq v_{\alpha} \in {\frak L}_{\alpha},$ since $[h_{\bar 0}, v_{\alpha}] = \alpha(h_{\bar 0}) \phi(v_{\alpha}),$ by writing $h' := \phi(h_{\bar 0})$ we have
\begin{equation*}
[h',\phi(v_{\alpha})] = \phi([h_{\bar 0}, v_{\alpha}]) = \alpha(h_{\bar 0})\phi^2(v_{\alpha}) = \alpha\phi^{-1}(h')\phi(\phi(v_{\alpha})).
\end{equation*}
That is, $0 \neq \phi(v_{\alpha}) \in {\frak L}_{\alpha \phi^{-1}}, \alpha\phi^{-1} \in \Lambda \cup \{0\}$ and also
\begin{equation}\label{ro1}
\phi({\frak L}_{\alpha}) \subset {\frak L}_{\alpha \phi^{-1}}.
\end{equation}
Now, for any $h_{\bar 0} \in H_{\bar 0}$ and $0 \neq v_{\alpha} \in {\frak L}_{\alpha},$ as in the previous inclusion, for $h' := \phi^{-1}(h_{\bar 0})$ we get
\begin{equation*}
[h',\phi^{-1}(v_{\alpha})] = \phi^{-1}([h_{\bar 0}, v_{\alpha}]) = \alpha(h_{\bar 0})\phi^{-1}(\phi(v_{\alpha})) = \alpha\phi(h')\phi(\phi^{-1}(v_{\alpha})).
\end{equation*}
That is, $0 \neq \phi^{-1}(v_{\alpha}) \in {\frak L}_{\alpha \phi}, \alpha\phi \in \Lambda \cup \{0\}$ and conclude
\begin{equation}\label{ro2.25}
\phi^{-1}({\frak L}_{\alpha}) \subset {\frak L}_{\alpha \phi}.
\end{equation}
Now for any $x \in {\frak L}_{\alpha \phi^{-1}},$ we can write $x=\phi (\phi^{-1}(x))$ and since $\alpha\phi^{-1} \in \Lambda \cup \{0\}$ by Equation \eqref{ro2.25} we have $\phi^{-1}(x) \in {\frak L}_{\alpha}$, and we conclude ${\frak L}_{\alpha \phi^{-1}} \subset \phi({\frak
L}_{\alpha}).$
This fact together with Equation (\ref{ro1}) show $\phi({\frak L}_{\alpha}) = {\frak L}_{\alpha \phi^{-1}}$. In a similar way we also prove $\phi^{-1}({\frak L}_{{\alpha}}) = {\frak L}_{{\alpha}\phi}.$

2. For any $h_{\bar{0}} \in H_{\bar{0}}, v_{\alpha} \in {\frak L}_{\alpha}$ and $v_{\beta} \in {\frak L}_{\beta}$ such that $[v_{\alpha},v_{\beta}] \ne 0,$ by denoting $h' := \phi(h_{\bar{0}}),$ we get
\begin{eqnarray*}
\begin{split}
[h',[v_{\alpha},v_{\beta}]] &= -[[h_{\bar{0}},v_{\beta}],\phi(v_{\alpha})] + [[h_{\bar{0}}, v_{\alpha}],\phi(v_{\beta})]\\
&= (\alpha + \beta)(h_{\bar 0})\phi([v_{\alpha},v_{\beta}])\\
&= (\alpha + \beta)\phi^{-1}(h')\phi([v_{\alpha},v_{\beta}]).
\end{split}
\end{eqnarray*}
That is, $0 \neq [v_{\alpha},v_{\beta}] \in {\frak L}_{\alpha\phi^{-1} + \beta\phi^{-1}}$ and $\alpha\phi^{-1} + \beta\phi^{-1} \in \Lambda \cup \{0\}$.
\end{proof}

\begin{remark}\label{rmk1a}
From Lemma \ref{lema1} and Equation \eqref{separa} we can assert that for any $\alpha, \beta \in \Lambda \cup \{0\}$ and $\bar{i},\bar{j} \in \mathbb{Z}_2$:
\begin{enumerate}
\item[{\rm 1.}] If $\phi({\frak L}_{{\alpha},\bar{i}}) \neq 0$ then ${\alpha}\phi^{-1} \in \Lambda \cup \{0\}$ and $\phi({\frak L}_{{\alpha},\bar{i}}) = {\frak L}_{{\alpha}\phi^{-1},\bar{i}},$

\item[{\rm 2.}] If $\phi^{-1}({\frak L}_{{\alpha},\bar{i}}) \neq 0$ then ${\alpha}\phi \in \Lambda \cup \{0\}$ and $\phi^{-1}({\frak L}_{{\alpha},\bar{i}}) = {\frak L}_{{\alpha}\phi,\bar{i}},$

\item[{\rm 3.}] If $[{\frak L}_{\alpha,\bar{i}}, {\frak L}_{\beta,\bar{j}}] \neq 0$ then $\alpha \phi^{-1}+\beta \phi^{-1} \in \Lambda \cup \{0\}$ and $[{\frak L}_{\alpha,\bar{i}}, {\frak L}_{\beta,\bar{j}}] \subset {\frak L}_{\alpha \phi^{-1}+\beta \phi^{-1},\bar{i}+\bar{j}},$
\end{enumerate}
\end{remark}

\begin{lemma}\label{lema2}
If $\alpha \in \Lambda$ then $\alpha\phi^z \in \Lambda$ for any $z \in {\mathbb Z}$.
\end{lemma}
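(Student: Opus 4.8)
The plan is to bootstrap from the two transport statements of Lemma \ref{lema1}.1 by an induction on $|z|$, handling separately the only real subtlety, namely upgrading the conclusion from $\Lambda \cup \{0\}$ to $\Lambda$. The base case $z = 0$ is immediate, since $\alpha\phi^0 = \alpha \in \Lambda$ by hypothesis.

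For the positive step I would assume $\alpha\phi^n \in \Lambda$ and put $\beta := \alpha\phi^n$, so that ${\frak L}_{\beta} \neq 0$. Since $\phi$ is a graded automorphism, its restriction to $H_{\bar 0}$ is a bijection (as already used in the excerpt) and $\phi^{-1}$ is injective on ${\frak L}$; hence $\phi^{-1}({\frak L}_{\beta}) \neq 0$. The second assertion of Lemma \ref{lema1}.1 then gives $\beta\phi = \alpha\phi^{n+1} \in \Lambda \cup \{0\}$ together with $\phi^{-1}({\frak L}_{\beta}) = {\frak L}_{\alpha\phi^{n+1}} \neq 0$. The negative step is symmetric: with $\gamma := \alpha\phi^{-n}$ one has $\phi({\frak L}_{\gamma}) \neq 0$ by injectivity of $\phi$, and the first assertion of Lemma \ref{lema1}.1 yields $\gamma\phi^{-1} = \alpha\phi^{-(n+1)} \in \Lambda \cup \{0\}$ with nonzero associated root space.

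The step that needs an extra word -- and the only place where the hypothesis $\alpha \in \Lambda$ is used rather than merely $\alpha \in \Lambda \cup \{0\}$ -- is discarding the value $0$ in the above conclusions. Here I would argue directly on functionals: as $\alpha \neq 0$, choose $h_0 \in H_{\bar 0}$ with $\alpha(h_0) \neq 0$; since $\phi^z$ is a bijection of $H_{\bar 0}$, the element $h := \phi^{-z}(h_0)$ satisfies $(\alpha\phi^z)(h) = \alpha(h_0) \neq 0$, whence $\alpha\phi^z \neq 0$. Together with the nonvanishing of the associated root spaces obtained in the induction, this places $\alpha\phi^z$ in $\Lambda$ for every $z \in {\mathbb Z}$.

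I do not anticipate a genuine obstacle: all the algebraic content is already packaged in Lemma \ref{lema1}, and the argument reduces to the observation that composing a nonzero linear functional with the invertible map $\phi^z$ cannot annihilate it, so the process never escapes $\Lambda$ into the spurious possibility $\{0\}$.
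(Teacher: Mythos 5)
Your proposal is correct and follows essentially the same route as the paper, which simply states that the lemma is a consequence of Lemma~\ref{lema1}-1; you have merely written out the induction on $|z|$ and made explicit the (worthwhile) observation that $\alpha\phi^z\neq 0$ because $\alpha\neq 0$ and $\phi^z$ restricts to a bijection of $H_{\bar 0}$, which is exactly what is needed to stay in $\Lambda$ rather than $\Lambda\cup\{0\}$.
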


\begin{proof}
Consequence of Lemma \ref{lema1}-1.
\end{proof}

\begin{remark}\label{remarkr}
From Lemma \ref{lema2}, Remark \ref{rmk1a} and Equation \eqref{separa} we can assert if ${\frak L}_{\alpha,\bar{i}} \neq 0,$ with ${\bar i} \in \mathbb{Z}_2,$ then ${\frak L}_{\alpha\phi^z,\bar{i}} \neq 0$ for any $z \in {\mathbb Z}.$
\end{remark}

\section{Connections of roots techniques}

In the following, ${\frak L}$ denotes a split regular Hom-Lie superalgebra and $${\frak L} = {\frak L}_0 \oplus \Bigl(\bigoplus\limits_{\alpha \in \Lambda}{\frak L}_{\alpha}\Bigr)$$ the corresponding  root spaces decomposition. Given a linear functional $\alpha: H_{\bar 0} \to {\hu K}$, we denote by $-\alpha: H_{\bar 0} \to {\hu K}$ the element in $(H_{\bar 0})^*$ defined by $(-\alpha)(h_{\bar 0}):=-\alpha(h_{\bar 0})$ for all $h_{\bar 0} \in H_{\bar 0}$. We also denote by
$-\Lambda := \{-\alpha: \alpha \in \Lambda\}$ and $$\pm \Lambda := \Lambda \cup -\Lambda.$$

\begin{definition}\label{connection}\rm
Let $\alpha$ and $\beta$ be two elements in $\Lambda$. We shall
say that $\alpha$ is {\em connected} to  $\beta$ if there exists $\{\alpha_1,\alpha_2,\dots,\alpha_{k}\}\subset \pm \Lambda$ such that:
\begin{enumerate}
\item[] If $k=1$,
\begin{enumerate}
\item[1.] $\alpha_1 \in \{\alpha\phi^{-n}: n \in {\mathbb N}_0\}
\cap \{\pm \beta\phi^{-m}: m \in {\mathbb N}_0\}.$
\end{enumerate}

\item[] If $k \geq 2$,
\begin{enumerate}
\item[1.] $\alpha_1 \in \{\alpha \phi^{-n}: n \in {\mathbb N}_0\}$.
\item[2.] $\alpha_1 \phi^{-1} + \alpha_2\phi^{-1} \in \pm\Lambda,$

$\alpha_1 \phi^{-2}+ \alpha_2\phi^{-2}+ \alpha_3\phi^{-1} \in \pm\Lambda,$

$\alpha_1 \phi^{-3} + \alpha_2 \phi^{-3}+ \alpha_3 \phi^{-2}+
\alpha_4\phi^{-1} \in \pm\Lambda,$

\hspace{3.5cm} $\vdots$

$\alpha_1\phi^{-i}+ \alpha_2\phi^{-i} + \alpha_3\phi^{-i+1}+
\cdots +\alpha_{i+1}\phi^{-1} \in \pm\Lambda,$

\hspace{3.5cm} $\vdots$

$\alpha_1\phi^{-k+2}+ \alpha_2\phi^{-k+2} +
\alpha_3\phi^{-k+3}+ \cdots +\alpha_i\phi^{-k+i}+ \cdots
+\alpha_{k-1}\phi^{-1} \in \pm\Lambda.$

\item[3.] $\alpha_1\phi^{-k+1}+ \alpha_2\phi^{-k+1} +
\alpha_3\phi^{-k+2}+ \cdots + \alpha_i\phi^{-k+i-1}+
\cdots+\alpha_k\phi^{-1}\in \{\pm \beta\phi^{-m}: m \in
{\mathbb N}_0\}.$
\end{enumerate}
\end{enumerate}
We shall also say that $\{\alpha_1,\dots,\alpha_k\}$ is a {\it connection} from $\alpha$ to $\beta$.
\end{definition}

\noindent Observe that the above concept of connection of nonzero roots from $\alpha$ to $\beta$, given in Definition \ref{connection},  for the case $k=1$ is equivalent to the fact $\beta = \epsilon \alpha\phi^{z}$ for some $z \in {\mathbb Z}$ and $\epsilon \in \{1,-1\}$. Our next goal is to show that the connection relation is of equivalence. We begin with a couple of previous lemmas being their proof analogous to the ones of Lemma 2.1 and Lemma 2.2 in \cite{HomLie}.

\begin{lemma}\label{elprimero}
For any $\alpha \in \Lambda, \alpha\phi^{z_1}$ is connected to $ \alpha\phi^{z_2}$ for any $z_1, z_2 \in {\mathbb Z},$ and also to $- \alpha\phi^{z_2} $ in case $- \alpha \in \Lambda$.
\end{lemma}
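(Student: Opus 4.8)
The plan is to verify the defining conditions of Definition \ref{connection} directly, exploiting the flexibility that the connection relation allows arbitrary powers of $\phi$ at the endpoints. The key observation is that the case $k=1$ of the connection definition is, as the paper already remarks, equivalent to saying that the target root equals $\epsilon\, \alpha\phi^{z}$ for some $z \in {\mathbb Z}$ and $\epsilon \in \{1,-1\}$. So everything here should reduce to producing a length-one connection, i.e. exhibiting a single root $\alpha_1 \in \pm\Lambda$ lying in the intersection prescribed by condition 1 of the $k=1$ case.

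First I would treat the claim that $\alpha\phi^{z_1}$ is connected to $\alpha\phi^{z_2}$. Here the source root is $\alpha\phi^{z_1}$ and I take $\alpha_1 := \alpha\phi^{z_1}$, which is an element of $\Lambda \subset \pm\Lambda$ by Lemma \ref{lema2} since $\alpha \in \Lambda$. I must check $\alpha_1 \in \{\alpha\phi^{z_1}\phi^{-n}: n \in {\mathbb N}_0\}$, which holds trivially with $n=0$. Then I need $\alpha_1 \in \{\pm(\alpha\phi^{z_2})\phi^{-m}: m \in {\mathbb N}_0\}$, that is, $\alpha\phi^{z_1} = \alpha\phi^{z_2 - m}$ for some $m \in {\mathbb N}_0$ and a choice of sign $+$; this requires $z_1 = z_2 - m$, i.e. $m = z_2 - z_1$. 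The only subtlety is that $m$ must be a non-negative integer, so if $z_2 - z_1 \geq 0$ the argument is immediate. If instead $z_2 - z_1 < 0$, I would start the connection from the source written as $\alpha\phi^{z_1}\phi^{-n}$ with a suitably large $n \in {\mathbb N}_0$ so that the common element lies in both index ranges; concretely, choosing the connecting functional $\alpha_1 := \alpha\phi^{z_2}$ and using $n = z_1 - z_2 > 0$ on the source side and $m=0$ on the target side resolves the sign-of-exponent issue. In either case a single element of $\pm\Lambda$ realizes the connection.

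The second claim, that $\alpha\phi^{z_1}$ is connected to $-\alpha\phi^{z_2}$ when $-\alpha \in \Lambda$, is handled identically but now invoking $\epsilon = -1$. I take the connecting root to be $\alpha_1 := \alpha\phi^{z_1}$ (again in $\Lambda$ by Lemma \ref{lema2}), which sits in $\{\alpha\phi^{z_1}\phi^{-n}\}$ with $n=0$, and observe that $-(-\alpha\phi^{z_2})\phi^{-m} = \alpha\phi^{z_2-m}$, so the membership $\alpha_1 \in \{\pm(-\alpha\phi^{z_2})\phi^{-m}: m \in {\mathbb N}_0\}$ holds by the same exponent-matching as before. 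The hypothesis $-\alpha \in \Lambda$ guarantees that $-\alpha\phi^{z_2}$ is genuinely a root (again by Lemma \ref{lema2}), so that the statement ``$\alpha\phi^{z_1}$ is connected to $-\alpha\phi^{z_2}$'' is even meaningful, both endpoints being elements of $\Lambda$ as required in Definition \ref{connection}.

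The main obstacle, such as it is, is purely bookkeeping: the asymmetry between the source set $\{\alpha\phi^{-n}: n\in {\mathbb N}_0\}$, which only permits non-negative powers of $\phi^{-1}$, and the need to reach an arbitrary integer power $z_2$. The resolution is to notice that for a length-one connection we are free to realize the single bridging functional $\alpha_1$ as a negative-power shift of \emph{whichever} endpoint makes the relevant index non-negative, and that $\pm\Lambda$ is closed under all integer powers of $\phi$ by Lemma \ref{lema2}; this closure is exactly what lets us choose $\alpha_1$ in the intersection regardless of the relative sizes of $z_1$ and $z_2$. I expect the whole argument to be a few lines once the index arithmetic is laid out, precisely because the authors have flagged the analogy with Lemma 2.1 of \cite{HomLie}.
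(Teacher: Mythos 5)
Your proof is correct and takes essentially the same approach as the paper's (which simply defers to the analogous Lemma 2.1 of \cite{HomLie}): one exhibits a length-one connection by choosing the single bridging functional as a suitable negative-power shift of whichever endpoint makes both exponents non-negative, with Lemma \ref{lema2} guaranteeing membership in $\pm\Lambda$ and the meaningfulness of the endpoints. The exponent bookkeeping and the sign choice $\epsilon=-1$ for the $-\alpha\phi^{z_2}$ case are handled exactly as intended.
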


\begin{lemma}\label{transi1}
Let $\{\alpha_1,\dots,\alpha_k\}$ be a connection from $\alpha$
to $\beta$. The following assertions hold.
\begin{enumerate}
\item[\rm 1.] If $\alpha_1=\alpha\phi^{-n}, n \in {\mathbb N}_0,$ then for any $r \in {\mathbb N}_0$ such that $r \geq n$, there exists a connection $\{\bar \alpha_1,\dots,\bar \alpha_k\}$ from $\alpha$ to $\beta$ such that $\bar \alpha_1=\alpha\phi^{-r}$.

\item[\rm 2.] If $\alpha_1 = \epsilon \beta\phi^{-m}$ in case
$k=1$ or $$\alpha_1\phi^{-k+1} + \alpha_2\phi^{-k+1} + \alpha_3\phi^{-k+2} + \cdots + \alpha_k\phi^{-1} = \epsilon
\beta\phi^{-m} $$ in case $k\geq 2$, with $m \in {\mathbb N}_0$ and $\epsilon \in \{1,-1\},$ then for any $r \in {\mathbb N}_0$ such that $r \geq m$, there exists a connection $\{\bar \alpha_1,\dots,\bar \alpha_k\}$ from $\alpha$ to $\beta$ such that $\bar \alpha_1 = \epsilon\beta\phi^{-r}$ in case $k=1$ or
$$\bar \alpha_1\phi^{-k+1} + \bar \alpha_2\phi^{-k+1} + \bar \alpha_3\phi^{-k+2} + \cdots + \bar \alpha_k\phi^{-1}= \epsilon \beta \phi^{-r}$$ in case $k \geq 2$.
\end{enumerate}
\end{lemma}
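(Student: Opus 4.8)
The plan is to prove both assertions of Lemma \ref{transi1} by directly manipulating the defining chain of connection conditions from Definition \ref{connection}, exploiting the key observation that $\phi$ acts invertibly on $\Lambda$ (Lemma \ref{lema2}), so that applying powers of $\phi^{-1}$ to every entry of a connection again lands inside $\pm\Lambda$. The central idea is that a connection is a collection of membership conditions of the form $(\text{certain } \phi\text{-shifted sum of the } \alpha_i) \in \pm\Lambda$, and by Lemma \ref{lema2} these conditions are stable under the simultaneous replacement of each $\alpha_i$ by $\alpha_i \phi^{-1}$. The delicate point, and the reason the statement is phrased with $r \geq n$ (resp. $r \geq m$), is that we must shift the \emph{first} root $\alpha_1$ backward by $\phi^{-1}$ without disturbing the requirement that $\alpha_1$ still lie in $\{\alpha\phi^{-n}: n \in \mathbb{N}_0\}$ with a \emph{nonnegative} exponent; this is exactly why $r$ may only increase.

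For assertion 1, I would argue as follows. Given the connection $\{\alpha_1,\dots,\alpha_k\}$ from $\alpha$ to $\beta$ with $\alpha_1 = \alpha\phi^{-n}$, I first treat the reduction step from $n$ to $n+1$ and then iterate to reach an arbitrary $r \geq n$. For the single step, I define a new tuple $\{\bar\alpha_1,\dots,\bar\alpha_k\}$ by setting $\bar\alpha_1 := \alpha_1\phi^{-1} = \alpha\phi^{-(n+1)}$ and keeping all other roots unchanged, $\bar\alpha_j := \alpha_j$ for $j \geq 2$. Then condition 1 of Definition \ref{connection} holds because $\bar\alpha_1 = \alpha\phi^{-(n+1)}$ with $n+1 \in \mathbb{N}_0$. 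For the intermediate conditions in item 2, I would observe that each required sum for the new tuple differs from the corresponding sum for the original tuple only in the first summand (which now carries one extra power of $\phi^{-1}$), and I would need to verify that the resulting expressions still lie in $\pm\Lambda$; here one uses that the original conditions already place the relevant sums in $\pm\Lambda$ together with Lemma \ref{lema2} applied to absorb the shift. The final condition in item 3 is handled the same way, noting that the target expression lands in $\{\pm\beta\phi^{-m}: m \in \mathbb{N}_0\}$ with the same $m$, since only the leading term has moved. Iterating the single step $r-n$ times produces the desired connection with $\bar\alpha_1 = \alpha\phi^{-r}$.

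For assertion 2, the roles of $\alpha$ and $\beta$ are essentially interchanged, but one must work from the \emph{other end} of the chain. In the case $k=1$ the statement reduces to observing that if $\alpha_1 = \epsilon\beta\phi^{-m}$ is a connection then $\epsilon\beta\phi^{-r}$ for $r \geq m$ is obtained by the symmetric argument, again using Lemma \ref{lema2} and the equivalence noted after Definition \ref{connection} that $k=1$ means $\beta = \epsilon'\alpha\phi^z$. In the case $k \geq 2$, I would apply $\phi^{-1}$ simultaneously to every entry $\alpha_i$, replacing the tuple by $\{\alpha_1\phi^{-1},\dots,\alpha_k\phi^{-1}\}$; this shifts every sum in item 2 and in item 3 by one extra power of $\phi^{-1}$, so the terminal expression in item 3 becomes $\epsilon\beta\phi^{-(m+1)}$, while the initial condition in item 1 still holds because $\alpha_1\phi^{-1} = \alpha\phi^{-(n+1)}$ remains of the required form. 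Iterating gives the connection with the prescribed terminal value $\epsilon\beta\phi^{-r}$.

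The main obstacle I anticipate is purely bookkeeping: one must check that \emph{every} line of the nested system in item 2 of Definition \ref{connection}, not merely the first and last, survives the shift and still represents an element of $\pm\Lambda$. The cleanest way to manage this is to record the general $i$-th condition $\alpha_1\phi^{-i} + \alpha_2\phi^{-i} + \alpha_3\phi^{-i+1} + \cdots + \alpha_{i+1}\phi^{-1} \in \pm\Lambda$ abstractly and verify that the transformation under consideration carries the $i$-th condition of the old tuple to the $i$-th condition of the new one; because the construction only prepends powers of $\phi^{-1}$ uniformly, and because $\pm\Lambda$ is stable under $\phi^{\pm 1}$ by Lemma \ref{lema2}, each membership is preserved. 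Since the proof is explicitly stated to be analogous to Lemma 2.1 and Lemma 2.2 of \cite{HomLie}, I would keep the verification schematic and emphasize only these stability points rather than writing out all lines.
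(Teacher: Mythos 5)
Your overall strategy---replace the connection by a $\phi$-shifted tuple and invoke Lemma \ref{lema2} to see that membership in $\pm\Lambda$ survives---is the right one, and it is exactly what you (correctly) do for assertion 2, where you apply $\phi^{-1}$ \emph{simultaneously to every entry}. The gap is in your execution of assertion 1: there you instead propose the single step $\bar\alpha_1:=\alpha_1\phi^{-1}$, $\bar\alpha_j:=\alpha_j$ for $j\geq 2$, shifting only the first entry. The $i$-th intermediate condition for this new tuple reads
$$\alpha_1\phi^{-i-1}+\alpha_2\phi^{-i}+\alpha_3\phi^{-i+1}+\cdots+\alpha_{i+1}\phi^{-1}\in\pm\Lambda,$$
and this functional is \emph{not} of the form $\bigl(\alpha_1\phi^{-i}+\alpha_2\phi^{-i}+\cdots+\alpha_{i+1}\phi^{-1}\bigr)\phi^{z}$ for any $z\in\mathbb{Z}$: the extra power of $\phi^{-1}$ sits on one summand only. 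Lemma \ref{lema2} guarantees stability of $\Lambda$ under composing an \emph{entire} root with powers of $\phi$; it cannot ``absorb'' a shift applied to a single summand, so there is no reason for the displayed sum to lie in $\pm\Lambda$. The same objection defeats your claim about the terminal condition in item 3: if only the leading term moves, the terminal sum is a genuinely different functional and need not equal $\pm\beta\phi^{-m}$ for any $m$ anymore.

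The repair is immediate and already contained in your treatment of assertion 2: use the uniform shift for assertion 1 as well. Setting $\bar\alpha_i:=\alpha_i\phi^{-(r-n)}$ for \emph{every} $i$ turns each condition of Definition \ref{connection} for $\{\alpha_1,\dots,\alpha_k\}$ into the corresponding condition for $\{\bar\alpha_1,\dots,\bar\alpha_k\}$ composed on the right with $\phi^{-(r-n)}$, so every membership is preserved by Lemma \ref{lema2}; moreover $\bar\alpha_1=\alpha\phi^{-r}$ and the terminal sum becomes $\epsilon\beta\phi^{-(m+r-n)}$, which still lies in $\{\pm\beta\phi^{-m'}:m'\in\mathbb{N}_0\}$ since $r\geq n$. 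The identical computation with exponent $-(r-m)$ (together with the observation that for $k=1$ one has $\alpha_1=\alpha\phi^{-n}=\epsilon\beta\phi^{-m}$, so the shifted root stays in both defining sets) proves assertion 2. This uniform-shift argument is the one intended by the paper, which refers to Lemmas 2.1 and 2.2 of \cite{HomLie}.
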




\noindent The proof of the next result follows the ideas of \cite{HomLie} for split Hom-Lie algebras. For the sake of completeness we add an sketch of the same.

\begin{proposition}\label{pro1}
The relation $\sim$ in $\Lambda$ defined by $\alpha \sim \beta$ if and only if  $\alpha$ is connected to $\beta$ is an equivalence relation.
\end{proposition}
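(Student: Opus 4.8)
The plan is to verify that $\sim$ is reflexive, symmetric and transitive. Throughout it is convenient to attach to any connection $\{\alpha_1,\dots,\alpha_k\}$ its sequence of partial sums
$$S_1:=\alpha_1,\qquad S_{i+1}:=(S_i+\alpha_{i+1})\phi^{-1}\quad(1\le i\le k-1),$$
for which one checks by induction that $S_i=\alpha_1\phi^{-i+1}+\alpha_2\phi^{-i+1}+\alpha_3\phi^{-i+2}+\cdots+\alpha_i\phi^{-1}$. In these terms the three items of Definition \ref{connection} say precisely: $S_1=\alpha\phi^{-n}$; each of $S_2,\dots,S_{k-1}$ lies in $\pm\Lambda$ (while $S_1,S_k\in\pm\Lambda$ hold automatically by Lemma \ref{lema2}); and $S_k=\epsilon\beta\phi^{-m}$ for some $\epsilon\in\{1,-1\}$ and $m\in{\mathbb N}_0$. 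Reflexivity is then immediate, being the case $z_1=z_2=0$ of Lemma \ref{elprimero} (equivalently, the $k=1$ connection $\alpha_1=\alpha$).

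For symmetry, suppose $\{\alpha_1,\dots,\alpha_k\}$ is a connection from $\alpha$ to $\beta$. If $k=1$, the observation after Definition \ref{connection} gives $\beta=\epsilon\alpha\phi^{z}$, hence $\alpha=\epsilon\beta\phi^{-z}$, and the same observation yields a connection from $\beta$ to $\alpha$. If $k\ge2$, I reverse the chain: using $\alpha_{i+1}=S_{i+1}\phi-S_i$ I seek a sequence $\{\beta_1,\dots,\beta_k\}$ whose partial sums are $v_j:=\epsilon S_{k+1-j}\phi^{d_j}$. Imposing the recursion $v_{j+1}=(v_j+\beta_{j+1})\phi^{-1}$ forces the auxiliary exponents to run in the arithmetic progression $d_{j+1}=d_j-2$, which is exactly what makes the $S$-terms cancel and leaves $\beta_{j+1}=-\epsilon\,\alpha_{k+1-j}\phi^{\,d_{j+1}+1}\in\pm\Lambda$, while $\beta_1=v_1=\beta\phi^{\,d_1-m}$. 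Choosing $d_1$ small enough that $d_1\le m$ and $d_k=d_1-2(k-1)\le n$, the endpoints $\beta_1\in\{\beta\phi^{-n'}\}$ and $v_k=\epsilon\alpha\phi^{\,d_k-n}\in\{\pm\alpha\phi^{-m'}\}$ acquire the form required by Definition \ref{connection}, so $\{\beta_1,\dots,\beta_k\}$ is a connection from $\beta$ to $\alpha$ (alternatively one aligns the two endpoints by Lemma \ref{transi1}).

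For transitivity, let $\{\alpha_1,\dots,\alpha_k\}$ connect $\alpha$ to $\beta$, with $S_k=\epsilon\beta\phi^{-m}$, and let $\{\gamma_1,\dots,\gamma_l\}$ connect $\beta$ to $\gamma$, with $\gamma_1=\beta\phi^{-p}$ and terminal sum $S'_l=\epsilon'\gamma\phi^{-q}$. By Lemma \ref{transi1}-2 and Lemma \ref{transi1}-1 I may assume, after replacing both connections, that $S_k=\epsilon\beta\phi^{-r}$ and $\gamma_1=\beta\phi^{-r}$ for one common $r\ge\max\{m,p\}$, so that $S_k=\epsilon S'_1$. The concatenation
$$\{\alpha_1,\dots,\alpha_k,\ \epsilon\gamma_2,\ \epsilon\gamma_3,\dots,\epsilon\gamma_l\}$$
then has partial sums $S_1,\dots,S_k,\epsilon S'_2,\dots,\epsilon S'_l$: each lies in $\pm\Lambda$, the first term is $\alpha\phi^{-n}$, and the last is $\epsilon\epsilon'\gamma\phi^{-q}\in\{\pm\gamma\phi^{-m'}\}$, so it is a connection from $\alpha$ to $\gamma$ and $\alpha\sim\gamma$.

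The delicate point is the symmetry step. Reversing a connection is not a formal operation, because along a chain the exponents of $\phi$ attached to the successive summands grow in a one-sided way; the whole construction hinges on choosing the auxiliary exponents $d_j$ in arithmetic progression so that the unwanted partial-sum terms cancel and each reversed summand is again a single element of $\pm\Lambda$, and then on pushing the two end exponents into $-{\mathbb N}_0$ (directly, or via Lemma \ref{transi1}). By contrast, once Lemma \ref{transi1} is used to match the exit exponent of the first connection with the entry exponent of the second, transitivity reduces to the purely formal concatenation above.
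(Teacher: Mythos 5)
Your proof is correct and follows essentially the same route as the paper: reflexivity from Lemma \ref{elprimero}, symmetry by exhibiting the explicit reversed connection (your recursion with $d_1=0$ reproduces exactly the paper's sequence $\{\beta\phi^{-m},\,-\epsilon\alpha_k\phi^{-1},\,-\epsilon\alpha_{k-1}\phi^{-3},\dots,-\epsilon\alpha_2\phi^{-2k+3}\}$), and transitivity by concatenating the two connections after matching the exit and entry exponents via Lemma \ref{transi1}. Your partial-sum bookkeeping merely makes explicit the verifications the paper leaves to the reader, so the approach is the same.
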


\begin{proof}
By Lemma \ref{elprimero} the relation $\sim$ is reflexive. The symmetric character of $\sim$ is consequence of consider $\alpha \sim \beta$ with a connection $\{\alpha_1,\alpha_2,\alpha_3,\dots,\alpha_{k-1},\alpha_k\} \subset \pm \Lambda$ from $\alpha$ to $\beta$. If $k=1$ we get $\beta \sim \alpha$ with the connection $\{\epsilon \alpha_1\}.$ If $k \geq 2$, we conclude $\beta \sim \alpha$ taking the connection
$$\{\beta\phi^{-m}, - \alpha_k \phi^{-1}, -\alpha_{k-1}\phi^{-3}, -\alpha_{k-2} \phi^{-5},\dots, -\alpha_{k-i}\phi^{-2i-1},\dots, -\alpha_2 \phi^{-2k+3}\} \subset \pm \Lambda$$ in case
$\alpha_1\phi^{-k+1} + \alpha_2\phi^{-k+1} +
\alpha_3\phi^{-k+2} + \cdots + \alpha_{k-i+1}\phi^{-i}+
\cdots + \alpha_k\phi^{-1}= \beta\phi^{-m},$ and the connection $$\bigl\{\beta\phi^{-m}, \alpha_k\phi^{-1}, \alpha_{k-1}\phi^{-3}, \alpha_{k-2}\phi^{-5},\dots, \alpha_{k-i}\phi^{-2i-1},\dots,  \alpha_2\phi^{-2k+3}\bigr\} \pm \Lambda$$ in case $\alpha_1\phi^{-k+1} + \alpha_2\phi^{-k+1} + \alpha_3\phi^{-k+2}+ \cdots + \alpha_{k-i+1}\phi^{-i} + \cdots + \alpha_k\phi^{-1}= -\beta\phi^{-m},$ for some $m \in {\mathbb N}_0.$

Finally, to verify that $\sim$ is transitive we suppose $\alpha \sim \beta$ with connection $\{\alpha_1,\dots,\alpha_k\}$ such that
\begin{equation}\label{1001}
\hbox{$\alpha_1=\epsilon \beta\phi^{-m}$ if $k=1$}
\end{equation}
for some $m \in {\mathbb N}_0$, $\epsilon \in \{1,-1\},$ or
\begin{equation}\label{unsigno}
\hbox{$\alpha_1\phi^{-k+1} + \alpha_2\phi^{-k+1} + \alpha_3\phi^{-k+2} + \cdots + \alpha_k\phi^{-1} = \epsilon
\beta\phi^{-m}$ if $k \geq 2$,}
\end{equation}
for some $r \in {\mathbb N}_0,$ and suppose $\beta \sim \gamma$ with connection $\{h_1,\dots,h_p\}.$ If $p=1$, $\{\alpha_1,\dots,\alpha_k\}$ is also a connection from $\alpha$ to $\gamma$. If $p\geq 2$, $\{\alpha_1,\dots,\alpha_k, h_2,\dots, h_p\}$ is a connection from $\alpha$ to $\gamma$ if $\epsilon =1$ in Equations \eqref{1001} or \eqref{unsigno}, and  $\{\alpha_1,\dots, \alpha_k,-h_2,\dots,-h_p\}$ is the referred connection if $\epsilon = -1$ in Equations \eqref{1001} or \eqref{unsigno}. Summarizing, the connection relation is also transitive and so it is an equivalence relation.
\end{proof}

\section{Decompositions}

By Proposition \ref{pro1} the connection relation is an
equivalence relation in $\Lambda$ and so we can consider the
quotient set $$\Lambda/\sim := \{[\alpha] : \alpha \in \Lambda \},$$ becoming $[\alpha]$ the set of nonzero roots ${\frak L}$ which are connected to $\alpha$. Our next goal in this section is to associate an (adequate)  ideal ${I}_{{[\alpha]}}$ of the split regular Hom-Lie superalgebra ${\frak L}$ to any $[\alpha]$ of $\Lambda/\sim$. For $[\alpha]$, with $\alpha \in \Lambda$, we define
$$H_{[\alpha]} := span_{\hu K}\bigl\{[{\frak L}_{\beta},{\frak L}_{\delta}] : \beta,\delta \in [\alpha]\bigr\} \cap {\frak L}_0.$$ Applying Lemma \ref{lema1}-2 we obtain
$$H_{[\alpha]} := span_{\hu K}\bigl\{[{\frak L}_{\beta},{\frak L}_{-\beta}] : \beta \in [\alpha]\bigr\} =$$
$$\Bigl(\sum\limits_{\beta \in [\alpha]}\bigl([{\frak L}_{\beta,{\bar 0}},{\frak L}_{-\beta,{\bar 0}}] + [{\frak L}_{\beta,{\bar 1}},{\frak L}_{-\beta,{\bar 1}}]\bigr)\Bigr) \oplus \Bigl(\sum\limits_{\beta \in [\alpha]}\bigl([{\frak L}_{\beta,{\bar 0}},{\frak L}_{-\beta,{\bar 1}}] + [{\frak L}_{\beta,{\bar 1}},{\frak L}_{-\beta,{\bar 0}}]\bigr)\Bigl) \subset H_{\bar 0} \oplus H_{\bar 1},$$ last equality being consequence of Equation \eqref{separa}, and we also define $$V_{[\alpha]} := \bigoplus\limits_{\beta \in [\alpha]}{\frak L}_{\beta} = (\bigoplus\limits_{\beta \in [\alpha]} {\frak L}_{\beta,{\bar 0}}) \oplus (\bigoplus\limits_{\beta \in [\alpha]} {\frak L}_{\beta, {\bar 1}}).$$ We denote by ${\frak L}_{[\alpha]}$ the following (graded) subspace of ${\frak L}$, $${\frak L}_{[\alpha]}:=H_{[\alpha]} \oplus V_{[\alpha]}.$$

\begin{proposition}\label{pro2}
For any $[\alpha] \in \Lambda/\sim$, the linear subspace ${\frak L}_{[\alpha]}$ is a graded subalgebra of ${\frak L}$.
\end{proposition}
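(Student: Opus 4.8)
The plan is to show that ${\frak L}_{[\alpha]} = H_{[\alpha]} \oplus V_{[\alpha]}$ is closed under the bracket and invariant under $\phi$, treating the grading carefully since all roots are functionals on $H_{\bar 0}$. I would organize the bracket computation into the natural cases coming from the decomposition into the ``Cartan-like'' piece $H_{[\alpha]}$ and the ``root'' piece $V_{[\alpha]}$, namely $[H_{[\alpha]}, V_{[\alpha]}]$, $[V_{[\alpha]}, V_{[\alpha]}]$, and $[H_{[\alpha]}, H_{[\alpha]}]$. Throughout, the key mechanism is Lemma \ref{lema1}-2, which tells us that a nonzero bracket $[{\frak L}_\beta, {\frak L}_\delta]$ lands in ${\frak L}_{\beta\phi^{-1}+\delta\phi^{-1}}$, so that closure reduces to checking that the relevant resulting roots (or their $\phi$-shifts) again lie in $[\alpha]$, i.e.\ remain connected to $\alpha$.

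First I would handle $[H_{[\alpha]}, V_{[\alpha]}]$. Since $H_{[\alpha]} \subset H = {\frak L}_0$, for $\beta \in [\alpha]$ and $h \in H_{[\alpha]}$ the bracket $[h, {\frak L}_\beta]$ lands in ${\frak L}_{\beta\phi^{-1}}$ by Lemma \ref{lema1}-2 (with the zero root contributing nothing). By Lemma \ref{lema2}, $\beta\phi^{-1} \in \Lambda$, and by Lemma \ref{elprimero}, $\beta\phi^{-1}$ is connected to $\beta$, hence $\beta\phi^{-1} \in [\alpha]$; so this bracket stays inside $V_{[\alpha]}$. The main work is the case $[V_{[\alpha]}, V_{[\alpha]}]$: for $\beta, \delta \in [\alpha]$ with $[{\frak L}_\beta, {\frak L}_\delta] \neq 0$, Lemma \ref{lema1}-2 places the result in ${\frak L}_{\beta\phi^{-1}+\delta\phi^{-1}}$. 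If $\beta\phi^{-1}+\delta\phi^{-1} = 0$ the bracket lands in $H = {\frak L}_0$, and I must verify it actually lies in $H_{[\alpha]}$ and not merely in $H$; this is where the definition of $H_{[\alpha]}$ as the span of brackets $[{\frak L}_\beta,{\frak L}_{-\beta}]$ is used, together with Lemma \ref{lema1}-2 forcing $\delta\phi^{-1} = -\beta\phi^{-1}$, i.e.\ $\delta = -\beta$, so that the bracket is of the required form. If instead $\eta := \beta\phi^{-1}+\delta\phi^{-1} \neq 0$, then $\eta \in \Lambda$ and I must show $\eta \in [\alpha]$; this is the transitivity-flavored step, established by exhibiting $\{\beta, \delta\}$ (after suitable $\phi$-adjustment) as a connection showing $\eta \sim \beta \sim \alpha$.

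For $[H_{[\alpha]}, H_{[\alpha]}]$, since $H$ is abelian and brackets of elements of $H$ with $H$ vanish, this case is immediate. Finally I would verify the $\phi$-invariance $\phi({\frak L}_{[\alpha]}) = {\frak L}_{[\alpha]}$: on $V_{[\alpha]}$ this follows from Lemma \ref{lema1}-1 and Lemma \ref{elprimero}, since $\phi({\frak L}_\beta) = {\frak L}_{\beta\phi^{-1}}$ with $\beta\phi^{-1} \in [\alpha]$; on $H_{[\alpha]}$ it follows because $\phi$ is an automorphism, so $\phi([{\frak L}_\beta, {\frak L}_{-\beta}]) = [\phi({\frak L}_\beta), \phi({\frak L}_{-\beta})] = [{\frak L}_{\beta\phi^{-1}}, {\frak L}_{-\beta\phi^{-1}}]$, which again is a generating bracket for $H_{[\alpha]}$ since $\pm\beta\phi^{-1} \in [\alpha]$.

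The step I expect to be the main obstacle is showing closure of $[V_{[\alpha]}, V_{[\alpha]}]$ when the resulting root $\eta = \beta\phi^{-1}+\delta\phi^{-1}$ is nonzero: here one must carefully produce an explicit connection from $\alpha$ to $\eta$ using the machinery of Definition \ref{connection} and the shifting Lemma \ref{transi1}, rather than relying on an abstract transitivity argument, because $\eta$ itself need not coincide with any of $\beta, \delta$ or their obvious $\phi$-translates. Organizing the $\phi$-exponent bookkeeping so that the chain $\{\beta\phi^{s}, \delta\phi^{t}\}$ (for appropriate $s,t$) literally satisfies the conditions of a connection from $\alpha$ to $\eta$ is the delicate part, and it is essential to use that connectedness is an equivalence relation (Proposition \ref{pro1}) to pass freely between $\alpha$, $\beta$, $\delta$, and $\eta$.
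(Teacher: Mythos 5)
Your proposal is correct and follows essentially the same route as the paper: reduce to the three bracket cases, use Lemma \ref{lema1}-2 to locate $[{\frak L}_{\beta},{\frak L}_{\delta}]$ in ${\frak L}_{\beta\phi^{-1}+\delta\phi^{-1}}$, handle $\delta=-\beta$ via the definition of $H_{[\alpha]}$, exhibit $\{\beta,\delta\}$ as a connection to $\beta\phi^{-1}+\delta\phi^{-1}$ and invoke transitivity otherwise, and get $\phi$-invariance from Lemma \ref{lema1}-1 and Lemma \ref{elprimero}. The ``delicate bookkeeping'' you anticipate at the end is in fact immediate, since $\{\beta,\delta\}$ itself already satisfies Definition \ref{connection} with $k=2$ and $m=0$, exactly as the paper does it.
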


\begin{proof}
First, we have to check that ${\frak L}_{{[\alpha]}}$ satisfies
$[{\frak L}_{[\alpha]}, {\frak L}_{[\alpha]}] \subset {\frak L}_{[\alpha]}$. Since $H_{[\alpha]} \subset {\frak L}_0 =H$, then
$[H_{[\alpha]}, H_{[\alpha]}]=0$ and we have
\begin{equation}\label{cero}
\bigl[H_{[\alpha]} \oplus V_{[\alpha]}, H_{[\alpha]} \oplus V_{[\alpha]}\bigr] \subset [H_{[\alpha]}, V_{[\alpha]}] +[V_{[\alpha]}, V_{[\alpha]}].
\end{equation}
Let us consider the first summand in Equation \eqref{cero}. Given $\beta \in [\alpha]$ we have $[H_{[\alpha]}, {\frak
L}_{\beta}] \subset {\frak L}_{\beta \phi^{-1}}$, being $\beta
\phi^{-1} \in [\alpha]$ by Lemma \ref{elprimero}. Hence
\begin{equation}\label{ceroo}
[H_{[\alpha]}, {\frak L}_{\beta}] \subset V_{[\alpha]}.
\end{equation}
Consider now the second summand in Equation \eqref{cero}, that is, $[V_{[\alpha]}, V_{[\alpha]}]$. Given $\beta, \gamma \in
[\alpha]$ such that $[{\frak L}_{\beta}, {\frak L}_{\gamma}]
\neq 0$, if $\gamma = -\beta$ then clearly $[{\frak L}_{\beta }, {\frak L}_{\gamma}] = [{\frak L}_{\beta}, {\frak L}_{-\beta}] \subset H_{[\alpha]}.$ Suppose $\gamma \neq -\beta$. Taking into account that the fact $[{{\frak L}}_{\beta},  {{\frak L}}_{\gamma}] \neq 0$ together with Lemma \ref{lema1}-2  ensure $\beta \phi^{-1}+\gamma \phi^{-1}\in \Lambda$, we have that $\{\beta,\gamma \}$ is a connection from $\beta $
to $\beta \phi^{-1}+ \gamma \phi^{-1} $. The transitivity of
$\sim$ gives now that $\beta \phi^{-1}+ \gamma \phi^{-1} \in
[\alpha]$ and so $[{\frak L}_{\beta}, {\frak L}_{\gamma}] \subset {\frak L}_{\beta \phi^{-1} + \gamma \phi^{-1}} \subset V_{[\alpha]}$. Hence $[\bigoplus\limits_{\beta \in [\alpha]} {\frak L}_{\beta}, \bigoplus \limits_{\beta \in [\alpha]} {\frak L}_{\beta}] \subset H_{[\alpha]} \oplus V_{[\alpha]}$. That is,
\begin{equation}\label{eq0.5}
[V_{[\alpha]}, V_{[\alpha]}] \subset {\frak L}_{[\alpha]}.
\end{equation}
From Equations \eqref{cero}-\eqref{eq0.5} we get
$[{\frak L}_{[\alpha]}, {\frak L}_{[\alpha]}] = [H_{[\alpha]} \oplus V_{[\alpha]},H_{[\alpha]} \oplus V_{[\alpha]}] \subset {\frak L}_{[\alpha]}.$

Second, we have to verify that $\phi({\frak L}_{[\alpha]}) = {\frak L}_{[\alpha]}$. But this is a direct consequence of Lemma \ref{lema1}-1 and Lemma \ref{elprimero}.
\end{proof}

\begin{proposition}\label{pro9}
If $[\alpha] \neq [\gamma]$ then $[{\frak L}_{[\alpha]} , {\frak L}_{[{\gamma}]}]=0$.
\end{proposition}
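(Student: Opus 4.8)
The plan is to expand $[{\frak L}_{[\alpha]}, {\frak L}_{[\gamma]}] = [H_{[\alpha]} \oplus V_{[\alpha]}, H_{[\gamma]} \oplus V_{[\gamma]}]$ into its four summands $[H_{[\alpha]}, H_{[\gamma]}]$, $[H_{[\alpha]}, V_{[\gamma]}]$, $[V_{[\alpha]}, H_{[\gamma]}]$ and $[V_{[\alpha]}, V_{[\gamma]}]$, and to show each vanishes by arguing from the hypothesis $[\alpha] \neq [\gamma]$. The first summand is immediate: since $H_{[\alpha]}, H_{[\gamma]} \subset {\frak L}_0 = H$ and $H$ is an abelian graded subalgebra, we have $[H_{[\alpha]}, H_{[\gamma]}] \subset [H,H] = 0$.

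The core step is the $V$--$V$ summand. Here I would fix $\beta \in [\alpha]$ and $\delta \in [\gamma]$ and \emph{suppose} $[{\frak L}_{\beta}, {\frak L}_{\delta}] \neq 0$, aiming for the contradiction $\beta \sim \delta$. By Lemma \ref{lema1}-2 the functional $\eta := \beta\phi^{-1} + \delta\phi^{-1}$ lies in $\Lambda \cup \{0\}$. If $\eta = 0$ then $\delta = -\beta \in \Lambda$, and Lemma \ref{elprimero} yields $\beta \sim -\beta = \delta$ at once. If $\eta \in \Lambda$, I observe that $\{\beta,\delta\}$ is a connection from $\beta$ to $\eta$: for $k=2$ the conditions of Definition \ref{connection} reduce (condition $1$ holds with $n=0$, condition $2$ is vacuous) to the single requirement $\beta\phi^{-1} + \delta\phi^{-1} = \eta \in \{\pm\eta\phi^{-m}\}$, satisfied with $m=0$. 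Symmetrically $\{\delta,\beta\}$ is a connection from $\delta$ to the same $\eta$, so $\beta \sim \eta$ and $\delta \sim \eta$, whence $\beta \sim \delta$ by Proposition \ref{pro1}. In every case $[\alpha] = [\gamma]$, contradicting the hypothesis, so $[V_{[\alpha]}, V_{[\gamma]}] = 0$.

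For the two mixed summands I would reduce to the $V$--$V$ result via the super Hom-Jacobi identity. Since $H_{[\alpha]}$ is spanned by brackets $[x,y]$ with $x \in {\frak L}_{\beta}$, $y \in {\frak L}_{-\beta}$ and $\beta \in [\alpha]$ (and only $\beta$ with $-\beta \in \Lambda$, hence $-\beta \in [\alpha]$ by Lemma \ref{elprimero}, contribute), it suffices to show $[[x,y],z] = 0$ for homogeneous $z \in {\frak L}_{\delta}$, $\delta \in [\gamma]$. The super Hom-Jacobi identity expresses $[[x,y],\phi(z)]$, up to a sign, in terms of $[[y,z],\phi(x)]$ and $[[z,x],\phi(y)]$; but $[y,z] \in [{\frak L}_{-\beta}, {\frak L}_{\delta}]$ and $[z,x] \in [{\frak L}_{\delta}, {\frak L}_{\beta}]$ both vanish by the $V$--$V$ step (applied with $-\beta, \beta \in [\alpha]$ and $\delta \in [\gamma]$), forcing $[[x,y],\phi(z)] = 0$. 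To finish I pass from $\phi(z)$ to an arbitrary element of $V_{[\gamma]}$: because $\phi$ is an automorphism and, by Lemma \ref{lema1}-1 together with Lemma \ref{elprimero}, carries each ${\frak L}_{\delta}$ onto ${\frak L}_{\delta\phi^{-1}}$ with $\delta\phi^{-1} \in [\gamma]$, every vector of $V_{[\gamma]}$ is $\phi(z)$ for a suitable such $z$; thus $[H_{[\alpha]}, V_{[\gamma]}] = 0$, and $[V_{[\alpha]}, H_{[\gamma]}] = 0$ follows symmetrically (or by skew-supersymmetry). Collecting the four vanishing summands gives $[{\frak L}_{[\alpha]}, {\frak L}_{[\gamma]}] = 0$. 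I expect the only delicate point to be this $\phi$-twist in the Jacobi identity, which forces the surjectivity/permutation argument needed to recover the untwisted bracket $[[x,y],z]$ from $[[x,y],\phi(z)]$.
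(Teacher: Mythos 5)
Your proof is correct and follows essentially the same route as the paper: the same three nontrivial summands, Lemma \ref{lema1}-2 plus the connection machinery for $[V_{[\alpha]},V_{[\gamma]}]$, and the super Hom-Jacobi identity combined with the surjectivity of $\phi$ on root spaces for the mixed terms. The only (harmless) difference is that you reach $\beta\sim\delta$ via two length-$2$ connections through $\beta\phi^{-1}+\delta\phi^{-1}$ and transitivity, whereas the paper uses the single length-$3$ connection $\{\alpha_1,\gamma_1,-\alpha_1\phi^{-1}\}$; your explicit treatment of the case $\delta=-\beta$ is a point the paper passes over more quickly.
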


\begin{proof}
We have
\begin{equation}\label{cuatro}
\bigl[H_{[\alpha]} \oplus V_{[\alpha]}, H_{[\gamma]} \oplus V_{[\gamma]}\bigr] \subset [H_{[\alpha]},V_{[\gamma]}] + [V_{[\alpha]}, H_{[\gamma]}] + [V_{[\alpha]},V_{[\gamma]}].
\end{equation}

Consider the above third summand $[V_{[\alpha]}, V_{[\gamma]}]$ and suppose there exist $\alpha_1 \in [\alpha]$ and $\gamma_1 \in [{\gamma}]$ such that $[{\frak L}_{\alpha_1}, {\frak L}_{\gamma_1}] \neq 0$. As necessarily $\alpha_1 \neq -\gamma_1$, then $\alpha_1 \phi^{-1} + \gamma_1 \phi^{-1} \in \Lambda$. So $\{\alpha_1, \gamma_1, -\alpha_1\phi^{-1} \}$ is a connection between $\alpha_1$ and $\gamma_1$. By the transitivity of the connection relation we have $\alpha \in [\gamma]$, a contradiction. Hence $[{\frak L}_{\alpha_1}, {\frak L}_{\gamma_1}]= 0$ and so
\begin{equation}\label{nueve}
[V_{[\alpha]}, V_{[\gamma]}]=0.
\end{equation}

Consider now the first summand $[H_{[\alpha]},V_{[\gamma]}]$ in Equation (\ref{cuatro}) and suppose there exist $\alpha_1 \in [\alpha]$ and $\gamma_1 \in [\gamma]$ such that $[[{\frak L}_{\alpha_1},{\frak L}_{-\alpha_1}],{\frak L}_{\gamma_1}] \neq 0$. By Lemma \ref{lema1}-1 we have ${\frak L}_{\gamma_1} = \phi({{\frak L}}_{\gamma_1\phi})$, so we obtain $\bigl[[{\frak L}_{\alpha_1},{\frak L}_{-\alpha_1}],\phi({\frak L}_{\gamma_1\phi})\bigr] \neq 0.$ Using the grading of ${\frak L}_{\alpha_1}$ we get $$\bigl[[{\frak L}_{\alpha_1,\bar{0}},{\frak L}_{-\alpha_1}], \phi({{\frak L}}_{\gamma_1\phi})\bigr] + \bigl[[{\frak L}_{\alpha_1,\bar{1}},{\frak L}_{-\alpha_1}], \phi({{\frak L}}_{\gamma_1\phi})\bigr] \neq 0.$$ Therefore $\bigl[[{\frak L}_{\alpha_1,\bar{0}},{\frak L}_{-\alpha_1}], \phi({{\frak L}}_{\gamma_1\phi})\bigr] \neq 0$ or $\bigl[[{\frak L}_{\alpha_1,\bar{1}},{\frak L}_{-\alpha_1}], \phi({{\frak L}}_{\gamma_1\phi})\bigr] \neq 0.$
In the first case, by super Hom-Jacobi identity we get either $[{\frak L}_{-\alpha_1},{\frak L}_{\gamma_1\phi}] \neq 0$ or $[{\frak L}_{\gamma_1\phi},{\frak L}_{\alpha_1, {\bar 0}}] \neq 0$. From here $[V_{[\alpha]},V_{[\gamma]}] \neq 0$ in any case, what contradicts Equation \eqref{nueve}. In the second case we have $0\bigl[[{\frak L}_{\alpha_1,{\bar 1}},{\frak L}_{-\alpha_1,{\bar 0}}],{\frak L}_{\gamma_1\phi}\bigr] + \bigl[[{\frak L}_{\alpha_1,{\bar 1}},{\frak L}_{-\alpha_1,{\bar 1}}],{\frak L}_{\gamma_1\phi}\bigr] \neq 0$ and so either $\bigl[[{\frak L}_{\alpha_1, {\bar 1}},{\frak L}_{-\alpha_1,{\bar 0}}],{\frak L}_{\gamma_1\phi}\bigr] \neq 0$ or $\bigl[[{\frak L}_{\alpha_1,{\bar 1}},{\frak L}_{-\alpha_1,{\bar 1}}],{\frak L}_{\gamma_1\phi}\bigr] \neq 0.$ By applying super Hom-Jacobi identity to any of these summands we have as above that necessarily $[V_{[\alpha]},V_{[\gamma]}] \neq 0$ what contradicts again Equation \eqref{nueve}.
From here $[H_{[\alpha]},V_{[\gamma]}] = 0$.

In a similar way we get $[V_{[\alpha]},H_{[\gamma]}] = 0$ and we conclude, together with Equations (\ref{cuatro}) and (\ref{nueve}) that $[{\frak L}_{[\alpha]}, {\frak L}_{[\beta]}] = 0.$
\end{proof}

\begin{theorem}\label{teo1}
Let ${\frak L}$ be a split regular Hom-Lie superalgebra. The following assertions hold:
\begin{enumerate}
\item[{\rm 1.}] For any $[\alpha] \in \Lambda/\sim$, the (graded) subalgebra $${\frak L}_{[\alpha]} = H_{[\alpha]} \oplus V_{[\alpha]}$$ of ${\frak L}$ associated to $[\alpha]$ is an ideal of ${\frak L}$.

\item[{\rm 2.}] If ${\frak L}$ is simple then there exists a connection from $\alpha$ to $\beta$ for any $\alpha, \beta \in \Lambda$ and $H = \sum\limits_{\alpha \in \Lambda}[{\frak L}_{\alpha}, {\frak L}_{-\alpha}]$.
\end{enumerate}
\end{theorem}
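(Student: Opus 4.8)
The plan is to prove the two assertions in turn, leaning on Propositions \ref{pro2} and \ref{pro9} for the first, and on the first assertion together with the definition of simplicity for the second.

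For assertion 1, Proposition \ref{pro2} already supplies that ${\frak L}_{[\alpha]}$ is a graded subalgebra, in particular that $\phi({\frak L}_{[\alpha]}) = {\frak L}_{[\alpha]}$; so the only thing left to check is $[{\frak L}_{[\alpha]}, {\frak L}] \subset {\frak L}_{[\alpha]}$. First I would split the ambient algebra along its root-space decomposition as
$${\frak L} = H \oplus \Bigl(\bigoplus_{\beta \in [\alpha]} {\frak L}_\beta\Bigr) \oplus \Bigl(\bigoplus_{\gamma \in \Lambda \setminus [\alpha]} {\frak L}_\gamma\Bigr),$$
so that $[{\frak L}_{[\alpha]}, {\frak L}]$ breaks into three contributions. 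The bracket of ${\frak L}_{[\alpha]}$ with the middle summand $\bigoplus_{\beta \in [\alpha]} {\frak L}_\beta = V_{[\alpha]} \subset {\frak L}_{[\alpha]}$ stays inside ${\frak L}_{[\alpha]}$ by Proposition \ref{pro2}. The bracket with the third summand vanishes: each ${\frak L}_\gamma$ with $\gamma \notin [\alpha]$ sits inside $V_{[\gamma]} \subset {\frak L}_{[\gamma]}$ with $[\gamma] \neq [\alpha]$, whence $[{\frak L}_{[\alpha]}, {\frak L}_\gamma] \subset [{\frak L}_{[\alpha]}, {\frak L}_{[\gamma]}] = 0$ by Proposition \ref{pro9}.

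The remaining contribution is $[{\frak L}_{[\alpha]}, H] \subset [H_{[\alpha]}, H] + [V_{[\alpha]}, H]$. The first term is $0$ since both factors lie in the abelian subalgebra $H = {\frak L}_0$. For the second term I would invoke Lemma \ref{lema1}-2 with the zero root: for $\beta \in [\alpha]$ one has $[{\frak L}_\beta, {\frak L}_0] \subset {\frak L}_{\beta\phi^{-1}}$, and Lemma \ref{lema2} together with Lemma \ref{elprimero} guarantee $\beta\phi^{-1} \in \Lambda$ and $\beta\phi^{-1} \in [\alpha]$; hence $[V_{[\alpha]}, H] \subset V_{[\alpha]}$. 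Collecting the three pieces gives $[{\frak L}_{[\alpha]}, {\frak L}] \subset V_{[\alpha]} + {\frak L}_{[\alpha]} + 0 \subset {\frak L}_{[\alpha]}$, so ${\frak L}_{[\alpha]}$ is an ideal.

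For assertion 2, suppose ${\frak L}$ is simple and fix $\alpha \in \Lambda$. Since $0 \neq {\frak L}_\alpha \subset V_{[\alpha]} \subset {\frak L}_{[\alpha]}$, the ideal ${\frak L}_{[\alpha]}$ is nonzero, so simplicity forces ${\frak L}_{[\alpha]} = {\frak L}$. Comparing this equality with the direct decomposition ${\frak L} = H \oplus (\bigoplus_{\gamma \in \Lambda} {\frak L}_\gamma)$, and recalling that $H_{[\alpha]} \subset H$ while $V_{[\alpha]} = \bigoplus_{\beta \in [\alpha]} {\frak L}_\beta$, the directness of the sum forces both $[\alpha] = \Lambda$ and $H_{[\alpha]} = H$. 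The first equality says every root is connected to $\alpha$; as $\alpha$ was arbitrary and $\sim$ is symmetric (Proposition \ref{pro1}), any two roots of ${\frak L}$ are connected. The second equality, combined with $H_{[\alpha]} = \sum_{\beta \in [\alpha]}[{\frak L}_\beta, {\frak L}_{-\beta}]$ and $[\alpha] = \Lambda$, yields $H = \sum_{\alpha \in \Lambda}[{\frak L}_\alpha, {\frak L}_{-\alpha}]$, completing the proof. The only point requiring mild care is the term $[V_{[\alpha]}, H]$: one must use that the adjoint action is governed by $H = {\frak L}_0$ and that $\beta\phi^{-1}$ remains in the same connection class, both handled by Lemma \ref{lema1}-2 and Lemma \ref{elprimero}; everything else is bookkeeping on the direct sum.
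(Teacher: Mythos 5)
Your proof is correct and follows essentially the same route as the paper: Proposition \ref{pro2} and Proposition \ref{pro9} handle the brackets with $V_{[\alpha]}$ and with the root spaces outside $[\alpha]$, the bracket with $H$ is controlled via Lemma \ref{lema1}-2 and Lemma \ref{elprimero}, and assertion 2 follows from simplicity forcing ${\frak L}_{[\alpha]}={\frak L}$ and comparing with the root space decomposition. You merely spell out the inclusion $[{\frak L}_{[\alpha]},H]\subset{\frak L}_{[\alpha]}$, which the paper asserts without detail.
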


\begin{proof}
1. Since $[{\frak L}_{[\alpha]}, H] \subset {\frak L}_{[\alpha]}$ we have by Proposition \ref{pro2} and Proposition \ref{pro9} that $$[{\frak L}_{[\alpha]}, {\frak L}] = [{\frak L}_{[\alpha]}, H \oplus (\bigoplus\limits_{\beta \in [\alpha]}{\frak L}_{{\beta}}) \oplus (\bigoplus\limits _{{\gamma} \notin [\alpha]}{{\frak L}}_{{\gamma}})]\subset {\frak L}_{[\alpha]}.
$$
We also have by Proposition \ref{pro2} that
$\phi({\frak L}_{[\alpha]}) = {\frak L}_{[\alpha]}$, we conclude ${\frak L}_{[\alpha]}$ is an ideal of ${\frak L}$.

\medskip

2. The simplicity of ${\frak L}$ implies ${\frak L}_{[\alpha]}={\frak L}$. From here, it is clear that $[\alpha] = \Lambda$ and  $H=\sum\limits_{\alpha \in \Lambda}[{\frak L}_{\alpha}, {\frak L}_{-\alpha}]$.
\end{proof}

\begin{theorem}\label{teo2}
A split regular Hom-Lie superalgebra ${\frak L}$ can be represented as $${\frak L} = U + \sum\limits_{[\alpha] \in \Lambda/\sim}{\frak L}_{[\alpha]},$$ where $U$ is a linear complement  in $H$ of $span_{\mathbb K}\{[{\frak L}_{\alpha},{\frak L}_{-\alpha}] : \alpha \in \Lambda\}$ and any ${\frak L}_{[\alpha]}$ is one of the ideals of ${\frak L}$ described in Theorem \ref{teo1}-1, satisfying $[{\frak L}_{[\alpha]}, {\frak L}_{[\gamma]}] = 0$ if $[\alpha] \neq [\gamma].$
\end{theorem}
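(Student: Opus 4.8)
The plan is to obtain the stated decomposition as a direct bookkeeping assembly of the root-space decomposition of ${\frak L}$ together with the structural facts already proved for the pieces ${\frak L}_{[\alpha]}$; no genuinely new computation should be needed. First I would recall that, by Definition \ref{split} together with the identity $H = {\frak L}_0$ established before Lemma \ref{lema1}, we have
$${\frak L} = H \oplus \Bigl(\bigoplus_{\alpha \in \Lambda} {\frak L}_{\alpha}\Bigr).$$
Since by Proposition \ref{pro1} the connection relation $\sim$ is an equivalence relation, it partitions $\Lambda$ into the pairwise disjoint classes $[\alpha] \in \Lambda/\sim$, so that every root lies in exactly one class. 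Regrouping the root spaces according to these classes gives $\bigoplus_{\alpha \in \Lambda} {\frak L}_{\alpha} = \bigoplus_{[\alpha] \in \Lambda/\sim} V_{[\alpha]}$, where $V_{[\alpha]} = \bigoplus_{\beta \in [\alpha]} {\frak L}_{\beta}$ as defined in Section 3.

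Next I would treat the piece $H$. The subspace $span_{\mathbb K}\{[{\frak L}_{\alpha}, {\frak L}_{-\alpha}] : \alpha \in \Lambda\}$ is contained in $H$, and because the classes $[\alpha]$ cover all of $\Lambda$ it coincides with $\sum_{[\alpha] \in \Lambda/\sim} H_{[\alpha]}$, recalling $H_{[\alpha]} = span_{\mathbb K}\{[{\frak L}_{\beta}, {\frak L}_{-\beta}] : \beta \in [\alpha]\}$ (where terms with $-\beta \notin \Lambda$ simply vanish). Choosing $U$ to be a linear complement of this subspace in $H$ yields $H = U + \sum_{[\alpha] \in \Lambda/\sim} H_{[\alpha]}$. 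Substituting this into the previous display and using that $H_{[\alpha]} \subset {\frak L}_0$ while $V_{[\alpha]}$ sits in the nonzero root spaces, so that $H_{[\alpha]} + V_{[\alpha]} = H_{[\alpha]} \oplus V_{[\alpha]} = {\frak L}_{[\alpha]}$, I would regroup to obtain
$${\frak L} = \Bigl(U + \sum_{[\alpha] \in \Lambda/\sim} H_{[\alpha]}\Bigr) + \bigoplus_{[\alpha] \in \Lambda/\sim} V_{[\alpha]} = U + \sum_{[\alpha] \in \Lambda/\sim} {\frak L}_{[\alpha]}.$$

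It remains only to quote the two facts that give the pieces their structural meaning: that each ${\frak L}_{[\alpha]}$ is an ideal of ${\frak L}$ is exactly Theorem \ref{teo1}-1, and the orthogonality $[{\frak L}_{[\alpha]}, {\frak L}_{[\gamma]}] = 0$ for $[\alpha] \neq [\gamma]$ is precisely Proposition \ref{pro9}, which finishes the argument. I do not expect a serious obstacle here; the one point deserving care is that the sum $\sum_{[\alpha]} H_{[\alpha]}$ need not be direct and, after being summed, need not intersect $U$ trivially inside $H$ — distinct $H_{[\alpha]}$ may overlap in $H$. This is exactly why the statement records a plain sum ``$+$'' for the $H$-contribution rather than a direct sum, while the $V_{[\alpha]}$ contributions remain genuinely in direct sum by the root-space decomposition.
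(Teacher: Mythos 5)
Your argument is correct and takes essentially the same route as the paper, whose proof simply states that the decomposition ${\frak L} = H \oplus (\bigoplus_{\alpha \in \Lambda}{\frak L}_{\alpha}) = U + \sum_{[\alpha]}{\frak L}_{[\alpha]}$ is clear and then cites Theorem \ref{teo1}-1 for the ideal property and Proposition \ref{pro9} for the orthogonality; you merely make the regrouping by equivalence classes explicit. One tiny slip in your closing remark: since $U$ is by definition a linear complement of $span_{\mathbb K}\{[{\frak L}_{\alpha},{\frak L}_{-\alpha}] : \alpha \in \Lambda\} = \sum_{[\alpha]}H_{[\alpha]}$ in $H$, it does meet that total sum trivially; the only reason the sum over classes is written with ``$+$'' rather than ``$\oplus$'' is that distinct $H_{[\alpha]}$ may overlap one another.
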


\begin{proof}
We have ${\frak L}_{[\alpha]}$ is well defined and, by Theorem
\ref{teo1}-1, an ideal of $\frak L$, being clear that $${\frak L} = H \oplus (\bigoplus\limits_{\alpha \in \Lambda} {\frak L}_{\alpha}) = U + \sum\limits_{[\alpha] \in \Lambda/\sim} {\frak L}_{[\alpha]}.$$ Finally from Proposition \ref{pro9} we obtain $[{\frak L}_{[\alpha]}, {\frak L}_{[\gamma]}] = 0$ if $[\alpha] \neq [\gamma].$
\end{proof}

\noindent Let us denote by ${\mathcal Z}({\frak L}) := \bigl\{v \in {\frak L} : [v, {\frak L}] = 0\bigr\}$ the {\it center} of ${\frak L}$.

\begin{corollary}\label{coro1}
If ${\mathcal Z}({\frak L}) = 0$ and $H = \sum_{\alpha \in \Lambda}[{\frak L}_{\alpha}, {\frak L}_{-\alpha}]$ then ${\frak L}$ is the direct sum of the ideals given in Theorem \ref{teo1}, $${\frak L} = \bigoplus\limits_{[\alpha] \in \Lambda/\sim} {\frak L}_{[\alpha]},$$ being $[{\frak L}_{[\alpha]}, {\frak L}_{[\gamma]}]=0$ if $[\alpha] \neq [\gamma].$
\end{corollary}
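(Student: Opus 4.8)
The plan is to prove Corollary \ref{coro1} by starting from the decomposition already furnished by Theorem \ref{teo2}. Under the hypothesis $H = \sum_{\alpha \in \Lambda}[{\frak L}_{\alpha},{\frak L}_{-\alpha}]$, the linear complement $U$ in $H$ of $\mathrm{span}_{\hu K}\{[{\frak L}_{\alpha},{\frak L}_{-\alpha}] : \alpha \in \Lambda\}$ collapses to $U = 0$, so Theorem \ref{teo2} already gives the (non-direct) sum decomposition ${\frak L} = \sum_{[\alpha] \in \Lambda/\sim}{\frak L}_{[\alpha]}$. Thus the only remaining task is to upgrade this sum to a \emph{direct} sum, that is, to show that for each fixed $[\alpha]$ one has ${\frak L}_{[\alpha]} \cap \sum_{[\gamma] \neq [\alpha]}{\frak L}_{[\gamma]} = 0$.

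First I would establish directness on the root-space part $V_{[\alpha]}$. Since each ${\frak L}_{[\alpha]} = H_{[\alpha]} \oplus V_{[\alpha]}$ with $H_{[\alpha]} \subset H = {\frak L}_0$ and $V_{[\alpha]} = \bigoplus_{\beta \in [\alpha]}{\frak L}_{\beta}$, the intersection of any $V_{[\alpha]}$ with the sum of the others is zero because distinct classes $[\alpha] \neq [\gamma]$ consist of disjoint collections of nonzero roots, and the ambient decomposition ${\frak L} = H \oplus (\bigoplus_{\alpha \in \Lambda}{\frak L}_{\alpha})$ is already direct. Hence any element in the intersection ${\frak L}_{[\alpha]} \cap \sum_{[\gamma]\neq[\alpha]}{\frak L}_{[\gamma]}$ must lie in $H_{[\alpha]} \subset {\frak L}_0 = H$, reducing the problem to controlling the $H$-components $H_{[\alpha]}$.

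The key step, and the place where the centreless hypothesis enters, is to show that the family $\{H_{[\alpha]}\}$ is in direct sum, or more precisely that any $h \in H_{[\alpha]}$ which also lies in $\sum_{[\gamma]\neq[\alpha]}{\frak L}_{[\gamma]}$ must be zero. The plan is to use Proposition \ref{pro9}: for such an $h$, and for every $[\gamma] \neq [\alpha]$, we have $[h, V_{[\gamma]}] \subset [{\frak L}_{[\alpha]},{\frak L}_{[\gamma]}] = 0$ by the disjointness established above, while $[h, V_{[\alpha]}]$ can be analysed by writing $h$ as the $H$-component forced by the other summands. Concretely, I would argue that $h$ brackets trivially with $H$ (since $H$ is abelian) and with every $V_{[\gamma]}$ including $[\gamma] = [\alpha]$, the latter because $h$ simultaneously belongs to the complementary sum and Proposition \ref{pro9} kills cross-class brackets; combining these gives $[h,{\frak L}] = 0$, so $h \in {\mathcal Z}({\frak L}) = 0$.

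The main obstacle I anticipate is the bracket $[h, V_{[\alpha]}]$: an element $h \in H_{[\alpha]}$ need not a priori centralise its own class's root spaces, so one cannot immediately conclude $[h,{\frak L}]=0$ merely from cross-class vanishing. The resolution is that the hypothetical $h$ lies in \emph{both} ${\frak L}_{[\alpha]}$ and $\sum_{[\gamma]\neq[\alpha]}{\frak L}_{[\gamma]}$; viewing it through the second representation and applying Proposition \ref{pro9} with the roles exchanged shows $[h, V_{[\alpha]}] = 0$ as well, since $V_{[\alpha]} \subset {\frak L}_{[\alpha]}$ while $h$, regarded as a member of the complementary sum, is orthogonal to ${\frak L}_{[\alpha]}$ under the bracket. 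Together with $[h,H]=0$ and $[h,V_{[\gamma]}]=0$ for $[\gamma]\neq[\alpha]$, this yields $[h,{\frak L}]=0$, whence $h=0$ by centrelessness. This forces ${\frak L}_{[\alpha]} \cap \sum_{[\gamma]\neq[\alpha]}{\frak L}_{[\gamma]} = 0$ for every class, giving the direct sum ${\frak L} = \bigoplus_{[\alpha]\in\Lambda/\sim}{\frak L}_{[\alpha]}$, and the relation $[{\frak L}_{[\alpha]},{\frak L}_{[\gamma]}]=0$ for $[\alpha]\neq[\gamma]$ is inherited directly from Proposition \ref{pro9}.
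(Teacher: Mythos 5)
Your argument is correct and follows essentially the same route as the paper: the hypothesis on $H$ makes $U=0$ in Theorem \ref{teo2}, and directness is deduced from $[{\frak L}_{[\alpha]},{\frak L}_{[\gamma]}]=0$ together with ${\mathcal Z}({\frak L})=0$. The paper states this in two lines; you have simply filled in the details (reduction of the intersection to the $H$-components and the double use of Proposition \ref{pro9} to show such an element is central), which is exactly the intended argument.
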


\begin{proof}
Since $H = \sum_{\alpha \in \Lambda}[{\frak L}_{\alpha}, {\frak L}_{-\alpha}]$ we get ${\frak L} = \sum_{[\alpha] \in \Lambda/\sim} {\frak L}_{[\alpha]}$. Finally, the direct character of the sum can be followed from the facts $[{\frak L}_{[\alpha]}, {\frak L}_{[\gamma]}] = 0$, if $[\alpha] \neq [\gamma]$, and ${\mathcal Z}({\frak L}) = 0$.
\end{proof}

\begin{example}\label{example2}
Let us consider the split Hom-Lie superalgebra ${{\frak L}}$  in Example \ref{example1}. We have that $$\Lambda=\{\alpha\} \cup \{ \pm \beta_n, \pm \gamma_n ; n \in {\mathbb N}, n \geq 2 \}.$$ It is straightforward to verify that $$\Lambda / \sim = \{ [ \alpha] \} \cup \{ [ \beta _n] : n \in {\mathbb N}, n \geq 2 \},$$ 
where $[\alpha]=\{\alpha\}$ and any $[\beta_n]=\{\pm \beta_n, \pm \gamma_n \}$. 
Indeed, for checking for instance $\beta_n \sim \gamma_n$, observe that the fact $\beta_n \phi^{-1} + \gamma_n \phi^{-1}= - \gamma_n$  gives us  that the set $\{\beta_n, \gamma_n \}$ is a connection from $\beta_n$ to  $\gamma_n.$

From here, the  results in this section allow us to consider  the family of ideals $$\hbox{${\frak L}_{[\alpha]}:={\frak L}_{\alpha},$ and }$$
$$\hbox{${\frak L}_{[\beta_n]}:=\langle h_n\rangle \oplus {\frak L}_{\beta_n} \oplus {\frak L}_{-\beta_n}\oplus {\frak L}_{\gamma_n}\oplus {\frak L}_{-\gamma_n}$, for $n \geq 2$};$$ and the decomposition of ${{ \frak L}}$ as the sum of the ideals

\begin{equation}\label{sep2}
{{\frak L}}= \langle e_2, e_3 \rangle \oplus { \frak L}_{[\alpha]} \oplus ( \bigoplus\limits_{n \in {\mathbb N}, n \geq 2} {\frak L}_{[\beta_n]}).
\end{equation}

\end{example}

\section{The simple components}

In this section we are interested in studying under which
conditions ${\frak L}$ decomposes as the direct sum of the family of its simple ideals. We recall that a roots system $\Lambda$ of a split regular Hom-Lie superalgebra ${\frak L}$ is called {\it symmetric} if it satisfies that $\alpha \in \Lambda$ implies $-\alpha \in \Lambda$. From now on we will suppose $\Lambda$ is symmetric.

\begin{lemma}\label{lema4}
If $I$ is an ideal of ${{\frak L}}$ such that $I \subset H$, then $I \subset {\mathcal Z}({{\frak L}}).$
\end{lemma}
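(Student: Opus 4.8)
The plan is to show that an ideal $I \subset H$ has trivial bracket with every root space, since $I$ already commutes with $H$ (as $H$ is abelian and $I \subset H = {\frak L}_0$). The strategy is to take $\alpha \in \Lambda$ and examine $[I, {\frak L}_\alpha]$. Because $I$ is an ideal we have $[I, {\frak L}_\alpha] \subset I \subset H$, but by Lemma \ref{lema1}-2 we also have $[I, {\frak L}_\alpha] = [{\frak L}_0, {\frak L}_\alpha] \subset {\frak L}_{0 \cdot \phi^{-1} + \alpha \phi^{-1}} = {\frak L}_{\alpha \phi^{-1}}$. Since $\alpha \in \Lambda$, Lemma \ref{lema2} gives $\alpha \phi^{-1} \in \Lambda$, so ${\frak L}_{\alpha \phi^{-1}}$ is a root space attached to a nonzero root and hence ${\frak L}_{\alpha \phi^{-1}} \cap H = 0$ by the direct character of the decomposition ${\frak L} = H \oplus (\bigoplus_{\alpha \in \Lambda} {\frak L}_\alpha)$. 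Therefore $[I, {\frak L}_\alpha] \subset H \cap {\frak L}_{\alpha \phi^{-1}} = 0$.

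First I would record that $I \subset H = {\frak L}_0$ forces $[I, H] \subset [H,H] = 0$, disposing of the bracket of $I$ against the MAGSA. Next, for each $\alpha \in \Lambda$ I would carry out the two containments above to conclude $[I, {\frak L}_\alpha] = 0$. Since ${\frak L} = H \oplus (\bigoplus_{\alpha \in \Lambda}{\frak L}_\alpha)$, combining these two facts yields $[I, {\frak L}] = 0$, which is precisely the statement $I \subset {\mathcal Z}({\frak L})$.

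The only subtle point, and the step I expect to require the most care, is the application of Lemma \ref{lema1}-2 to a bracket involving ${\frak L}_0$: that lemma is stated for $\alpha, \beta \in \Lambda \cup \{0\}$, so taking $\beta = 0$ is legitimate and gives $[{\frak L}_0, {\frak L}_\alpha] \subset {\frak L}_{\alpha \phi^{-1}}$ whenever the bracket is nonzero. One should note that if $[I, {\frak L}_\alpha] = 0$ already then there is nothing to prove, so the lemma is only invoked in the nontrivial case where its hypothesis $[{\frak L}_\alpha, {\frak L}_\beta] \neq 0$ holds. The remaining ingredient, that a nonzero root space meets $H = {\frak L}_0$ trivially, is immediate from the directness of the root space decomposition together with the fact that $\alpha \phi^{-1} \neq 0$ (guaranteed by $\alpha \phi^{-1} \in \Lambda$ via Lemma \ref{lema2}). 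Assembling these observations completes the argument.
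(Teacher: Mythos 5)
Your proof is correct and follows essentially the same route as the paper's (one-line) proof: $[I,H]\subset [H,H]=0$ and $[I,\bigoplus_{\alpha\in\Lambda}{\frak L}_{\alpha}]\subset H\cap(\bigoplus_{\alpha\in\Lambda}{\frak L}_{\alpha})=0$, the second containment coming from the ideal property on one side and Lemma \ref{lema1}-2 (with $\beta=0$) plus Lemma \ref{lema2} on the other. The only cosmetic slip is writing $[I,{\frak L}_{\alpha}]=[{\frak L}_0,{\frak L}_{\alpha}]$ where an inclusion $\subset$ is meant; the argument is unaffected.
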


\begin{proof}
Consequence of $[I,H] \subset [H,H]=0$ and $[I , \bigoplus \limits_{\alpha \in {\Lambda}} {{\frak L}}_{\alpha}] \subset (\bigoplus \limits_{\alpha \in {\Lambda}} {{\frak L}}_{\alpha}) \cap H=0.$
\end{proof}

\noindent Observe that the grading of $I$ and Equation \eqref{separa} let us write
\begin{equation}\label{ideal}
I=I_{\bar 0} \oplus I_{\bar 1}= \Bigl(\bigl(I_{\bar 0} \cap H_{\bar 0}\bigr) \oplus \bigl(\bigoplus\limits_{\alpha \in \Lambda} (I_{\bar 0} \cap {\frak L}_{\alpha, {\bar 0}})\bigr)\Bigr) \oplus \Bigl(\bigl(I_{\bar 1}\cap H_{\bar 1}\bigr) \oplus \bigl(\bigoplus\limits_{\alpha \in \Lambda} (I_{\bar 1} \cap {\frak L}_{\alpha, {\bar 1}})\bigr)\Bigr).
\end{equation}

\noindent Next lemmas have a proof similar to the ones of Lemma 4.2 and Lemma 4.3 in \cite{HomLie}.

\begin{lemma}\label{navi}
For any $\alpha, \beta \in \Lambda$ with $\alpha \neq \beta$ there exists $h_0 \in H_{\bar 0}$ such that $\alpha (h_0)\neq 0$ and $\alpha (h_0) \neq \beta (h_0)$.
\end{lemma}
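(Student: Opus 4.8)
The plan is to reduce the statement to finding a single element of $H_{\bar 0}$ lying outside the kernels of two linear functionals. Observe that the two required conditions $\alpha(h_0) \neq 0$ and $\alpha(h_0) \neq \beta(h_0)$ are exactly $\alpha(h_0) \neq 0$ and $(\alpha - \beta)(h_0) \neq 0$. Thus it suffices to produce $h_0 \in H_{\bar 0}$ with $\alpha(h_0) \neq 0$ and $(\alpha - \beta)(h_0) \neq 0$. Both functionals involved are nonzero: $\alpha \neq 0$ since $\alpha \in \Lambda$ (roots are nonzero by definition of $\Lambda$), and $\alpha - \beta \neq 0$ since $\alpha \neq \beta$.

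First I would pick, using $\alpha \neq 0$, an element $h_1 \in H_{\bar 0}$ with $\alpha(h_1) \neq 0$, and, using $\alpha \neq \beta$, an element $h_2 \in H_{\bar 0}$ with $(\alpha - \beta)(h_2) \neq 0$. The tempting shortcut of saying that a vector space is not a union of the two hyperplanes $\ker\alpha$ and $\ker(\alpha-\beta)$ is \emph{not} available here, because the base field $\hu K$ is arbitrary and may be finite; instead I would argue by an explicit case distinction that produces a suitable $h_0$ directly from $h_1$ and $h_2$.

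The case analysis runs as follows. If already $(\alpha - \beta)(h_1) \neq 0$, then $h_0 := h_1$ satisfies both requirements and we are done. Otherwise $(\alpha - \beta)(h_1) = 0$; in this situation, if $\alpha(h_2) \neq 0$ then $h_0 := h_2$ works, since $(\alpha - \beta)(h_2) \neq 0$ by the choice of $h_2$. In the remaining case, where both $(\alpha-\beta)(h_1) = 0$ and $\alpha(h_2) = 0$, I would take $h_0 := h_1 + h_2$. By linearity one then computes $\alpha(h_0) = \alpha(h_1) + \alpha(h_2) = \alpha(h_1) \neq 0$ and $(\alpha - \beta)(h_0) = (\alpha - \beta)(h_1) + (\alpha - \beta)(h_2) = (\alpha - \beta)(h_2) \neq 0$, so $h_0$ meets both conditions.

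The only genuinely delicate point is this last case, in which the element separating one condition happens to sit in the kernel of the other functional; here it is precisely the additivity of linear functionals that lets the sum $h_1 + h_2$ escape both kernels at once. Since the whole argument is elementary linear algebra and never uses that $\hu K$ is infinite, it remains valid over an arbitrary base field, as the standing hypotheses require.
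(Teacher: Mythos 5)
Your proof is correct and is essentially the approach the paper intends: the paper omits the argument, deferring to Lemma 4.2 of the cited split regular Hom-Lie algebra paper, where the same elementary case analysis is carried out — one produces an element of $H_{\bar 0}$ outside $\ker\alpha\cup\ker(\alpha-\beta)$, passing to $h_1+h_2$ in the degenerate case. Your caution about finite base fields is well placed, and the two-proper-subspace covering argument you use is indeed valid over an arbitrary field, as the standing hypotheses require.
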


\begin{lemma}\label{navi2}
If $I$ is an ideal of ${\frak L}$ and $x = h + \sum_{j=1}^n  v_{\alpha_j} \in I$, with $h \in H$, $v_{\alpha_j}\in {\frak L}_{\alpha_j}$ and ${\alpha_j} \neq \alpha_k$ if $j \neq k$. Then any $v_{\alpha_j}\in I$.
\end{lemma}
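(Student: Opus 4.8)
The plan is to diagonalize the adjoint action of $H_{\bar 0}$ on $I$ after correcting for the Hom-twist. The obstacle is precisely that twist: the root condition reads $[h_{\bar 0},v_\alpha]=\alpha(h_{\bar 0})\phi(v_\alpha)$, so $\ad(h_{\bar 0})$ acts as $\alpha(h_{\bar 0})\phi$ rather than a scalar and does not even preserve root spaces (it sends ${\frak L}_\alpha$ to ${\frak L}_{\alpha\phi^{-1}}$ by Lemma \ref{lema1}). To repair this I would fix $h_0\in H_{\bar 0}$ and introduce the linear operator
$$T:=\phi^{-1}\circ\ad(h_0):{\frak L}\to{\frak L}.$$
On a root space $T(v_\alpha)=\phi^{-1}\bigl(\alpha(h_0)\phi(v_\alpha)\bigr)=\alpha(h_0)v_\alpha$, so $T$ acts as the scalar $\alpha(h_0)$ on ${\frak L}_\alpha$, and $T(h)=\phi^{-1}([h_0,h])=0$ for every $h\in H$ since $[H,H]=0$. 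Because $I$ is a graded ideal we have $\ad(h_0)(I)\subset I$, and because $\phi$ is an automorphism with $\phi(I)=I$ we have $\phi^{-1}(I)=I$; hence $T(I)\subset I$. This is the key reduction: $T$ is a diagonalizable operator stabilizing $I$, annihilating the $H$-part of $x$, and acting on the $j$-th summand by the eigenvalue $\alpha_j(h_0)$.

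Second, I would argue by induction on the number $n$ of nonzero summands $v_{\alpha_j}$ (any zero summand lies trivially in $I$ and may be discarded). For $n=1$, since $\alpha_1\neq 0$ choose $h_0\in H_{\bar 0}$ with $\alpha_1(h_0)\neq 0$; then $T(x)=\alpha_1(h_0)v_{\alpha_1}\in I$ yields $v_{\alpha_1}\in I$. For the inductive step I would invoke Lemma \ref{navi} on the pair $\alpha_1\neq\alpha_2$ to obtain $h_0\in H_{\bar 0}$ with $\alpha_1(h_0)\neq 0$ and $\alpha_1(h_0)\neq\alpha_2(h_0)$. Setting $\lambda_j:=\alpha_j(h_0)$ and applying first $T$ (which removes the $H$-component) and then $(T-\lambda_2\,\id)$ gives
$$u:=(T-\lambda_2\,\id)\bigl(T(x)\bigr)=\sum_{j=1}^{n}\lambda_j(\lambda_j-\lambda_2)\,v_{\alpha_j}\in I.$$
The $j=2$ term vanishes, so $u$ has at most $n-1$ nonzero root components, while its $\alpha_1$-component $\lambda_1(\lambda_1-\lambda_2)v_{\alpha_1}$ is nonzero by the choice of $h_0$ (and $v_{\alpha_1}\neq 0$, so $u\neq 0$). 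The induction hypothesis applied to $u$ puts each of its components in $I$, in particular $v_{\alpha_1}\in I$; then $x-v_{\alpha_1}\in I$ has $n-1$ nonzero summands and a second application of the hypothesis finishes the remaining $v_{\alpha_j}$.

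The main technical point is the existence of a single functional doing the separation. Over an arbitrary base field one cannot expect one value $h_0$ to separate all $n$ roots simultaneously, so I deliberately avoid any Vandermonde-type system; the induction only ever requires separating one chosen root from one other root, which is exactly the content of the pairwise Lemma \ref{navi}. The remaining verifications — that $\ad(h_0)$ and $\phi^{-1}$ both stabilize $I$, and that $[H,H]=0$ forces $T$ to annihilate the $H$-component — are routine and use only that $I$ is a graded ideal and that $H={\frak L}_0$ is a MAGSA.
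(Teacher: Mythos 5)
Your proof is correct and follows essentially the same route as the paper (which defers to Lemma 4.3 of \cite{HomLie}): induction on the number of root components, using the pairwise separating functional of Lemma \ref{navi} and the fact that a graded ideal is stable under $\ad(h_0)$ and $\phi^{-1}$. Your operator $T=\phi^{-1}\circ\ad(h_0)$ is just a clean repackaging of the step ``bracket with $h_0$, then undo the twist by $\phi^{-1}$'' that the cited argument performs.
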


\noindent Let us introduce the concepts of root-multiplicativity and maximal length in the framework of split Hom-Lie superalgebras in a similar way to the ones for split Hom-Lie algebras, split Lie algebras, split color algebras, split Leibniz algebras and split Lie superalgebras (see \cite{HomLie, YoLi, Yocolor, YoLeibniz, Nosalg} for these notions and examples). We denote by $\Lambda_{\bar 0} := \{\alpha \in \Lambda : {\frak L}_{\alpha, {\bar 0}} \neq 0\}$ and by $\Lambda_{\bar 1} := \{\alpha \in \Lambda: {\frak L}_{\alpha, {\bar 1}} \neq 0\}$ (see Equation \eqref{separa}). So $\Lambda = \Lambda_{\bar 0} \cup \Lambda_{\bar 1}$, being a non necessarily disjoint union.

\begin{definition}
We say that a split regular Hom-Lie superalgebra ${\frak L}$ is {\it root-multiplicative} if given $\alpha \in \Lambda_{\bar i},$ $\beta \in \Lambda_{\bar j},$ for $\bar{i},\bar{j} \in \mathbb{Z}_2,$ such that $\alpha \phi^{-1}+\beta \phi^{-1}\in \Lambda_{\bar{i}+\bar{j}},$ then $[{\frak L}_{\alpha,\bar{i}} ,{\frak L}_{\beta,\bar{j}}]\neq 0.$
\end{definition}

\begin{definition}
It is said that a split regular Hom-Lie superalgebra ${\frak L}$ is of {\it maximal length} if $\dim {\frak L}_{\alpha,\bar{i}} \in \{0,1\}$ for any $\alpha \in \Lambda$ with $\bar{i} \in \mathbb{Z}_2.$
\end{definition}

\noindent Observe that if ${\frak L}$ is of maximal length, then the grading of $I$ and Equation (\ref{separa}) allow us to assert that given any nonzero ideal $I$ of ${\frak L}$ then
\begin{equation}\label{max}
I = \Bigl(\bigl(I_{\bar 0} \cap H_{\bar 0}\bigr) \oplus \bigl(\bigoplus\limits_{\alpha \in \Lambda^I_{\bar 0}} {\frak L}_{\alpha, {\bar 0}} \bigr)\Bigr) \oplus \Bigl(\bigl(I_{\bar 1} \cap H_{\bar 1}\bigr) \oplus \bigl(\bigoplus\limits_{\alpha \in \Lambda^I_{\bar 1}} {\frak L}_{\alpha, {\bar 1}} \bigr)\Bigr)
\end{equation}
${\rm where} \hspace{0.1cm} \Lambda_{\bar i}^I := \{\alpha \in  \Lambda_{\bar i} : I_{\bar i} \cap {\frak L}_{\alpha, {\bar i}} \neq 0 \},$ for ${\bar i} \in {\hu Z}_2.$

\begin{theorem}\label{teo3}
Let ${\frak L}$ be a split regular Hom-Lie superalgebra of maximal length, root multiplicative and with $\mathcal{Z}({\frak L})=0$. It holds that ${\frak L}$ is simple if and only if it has all its nonzero roots connected and $H = \sum\limits_{\alpha \in \Lambda}[{\frak L}_{\alpha}, {\frak L}_{-\alpha}]$.
\end{theorem}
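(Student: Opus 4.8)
The statement is an equivalence and I would treat its two implications separately. The implication ``${\frak L}$ simple $\Rightarrow$ all nonzero roots connected and $H=\sum_{\alpha\in\Lambda}[{\frak L}_\alpha,{\frak L}_{-\alpha}]$'' requires no new work: it is exactly Theorem \ref{teo1}-2. All the substance is in the converse, so from now on I assume (recall $\Lambda$ is symmetric) that every pair of nonzero roots is connected and that $H=\sum_{\alpha\in\Lambda}[{\frak L}_\alpha,{\frak L}_{-\alpha}]$, and I aim to prove ${\frak L}$ simple. Since $H\subset[{\frak L},{\frak L}]$ and $H\neq 0$ we already have $[{\frak L},{\frak L}]\neq 0$, so it only remains to show that an arbitrary nonzero (graded) ideal $I$ of ${\frak L}$ must coincide with ${\frak L}$.

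First I would pin down a root space inside $I$. If $I\subset H$, then Lemma \ref{lema4} gives $I\subset{\mathcal Z}({\frak L})=0$, a contradiction; hence $I\not\subset H$, and reading off the maximal-length description \eqref{max} of $I$ there are $\alpha_0\in\Lambda$ and $\bar i\in{\hu Z}_2$ with $I_{\bar i}\cap{\frak L}_{\alpha_0,\bar i}\neq 0$. Because $\dim{\frak L}_{\alpha_0,\bar i}\leq 1$ (maximal length) this already yields $0\neq{\frak L}_{\alpha_0,\bar i}\subset I$. As $I$ is a graded subalgebra we have $\phi(I)=I=\phi^{-1}(I)$, so parts 1 and 2 of Remark \ref{rmk1a} propagate this seed along the whole $\phi$-orbit: ${\frak L}_{\alpha_0\phi^z,\bar i}\subset I$ for every $z\in{\hu Z}$.

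The heart of the proof is the propagation claim that ${\frak L}_{\gamma,\bar j}\subset I$ for \emph{every} $\gamma\in\Lambda$ and $\bar j\in{\hu Z}_2$ with ${\frak L}_{\gamma,\bar j}\neq 0$. To establish it I would fix a connection $\{\alpha_1,\dots,\alpha_k\}$ from $\alpha_0$ to $\gamma$, available because all nonzero roots are connected, and normalise $\alpha_1$ to sit in the already-captured $\phi$-orbit by means of Lemma \ref{transi1}. I then travel along the partial sums $\sigma_1=\alpha_1\phi^{-1}+\alpha_2\phi^{-1}$ and $\sigma_l=\sigma_{l-1}\phi^{-1}+\alpha_{l+1}\phi^{-1}$, all of which lie in $\pm\Lambda=\Lambda$ (here the symmetry of $\Lambda$ enters). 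At the $l$-th step I bracket the component of $I$ already obtained with a suitable parity piece ${\frak L}_{\alpha_{l+1},\bar j_{l+1}}$ (nonzero by Remark \ref{remarkr}): the result lies in $I$ since $I$ is an ideal, falls inside the next root space by Remark \ref{rmk1a}-3, and is \emph{nonzero} by root-multiplicativity, so maximal length upgrades it to a full one-dimensional component of $I$. Iterating reaches ${\frak L}_{\epsilon\gamma\phi^{-m},\cdot}\subset I$, and a final application of $\phi^{\pm 1}$ removes the power of $\phi$. The parities $\bar j_l$ are forced at each step by the requirement that the intermediate component be nonzero, and the sign $\epsilon$ is handled by running the same argument with target $-\gamma$ (also connected to $\alpha_0$) and using once more that $\Lambda$ is symmetric.

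Granting the claim, the conclusion is immediate: $\bigoplus_{\gamma\in\Lambda}{\frak L}_\gamma\subset I$, hence $H=\sum_{\gamma\in\Lambda}[{\frak L}_\gamma,{\frak L}_{-\gamma}]\subset[I,I]\subset I$, and therefore $I\supset H\oplus\bigl(\bigoplus_{\gamma\in\Lambda}{\frak L}_\gamma\bigr)={\frak L}$, i.e.\ $I={\frak L}$. The main obstacle is exactly the propagation claim: root-multiplicativity ensures that no bracket along a connection vanishes, but one must simultaneously control the ${\hu Z}_2$-parities and the signs of the roots traversed so as to hit \emph{every} nonzero component ${\frak L}_{\gamma,\bar j}$. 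This two-fold bookkeeping is the extra difficulty that the super-grading superimposes on the purely Hom-Lie-algebra argument of \cite{HomLie}, and it is where the symmetry of $\Lambda$ is used in an essential way.
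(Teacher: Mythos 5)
Your overall skeleton (forward direction = Theorem \ref{teo1}-2; for the converse, locate a root space ${\frak L}_{\alpha_0,\bar i}$ inside a nonzero ideal $I$, spread it along the $\phi$-orbit, then walk along connections using root-multiplicativity and maximal length) matches the paper. But the ``propagation claim'' you isolate as the heart of the argument --- that ${\frak L}_{\gamma,\bar j}\subset I$ for \emph{every} $\gamma\in\Lambda$ and \emph{every} parity $\bar j$ with ${\frak L}_{\gamma,\bar j}\neq 0$ --- is not established by the method you sketch, and this is a genuine gap. Walking along a connection only delivers $0\neq{\frak L}_{\epsilon\gamma\phi^{-m},\bar h}\subset I$ for \emph{some} sign $\epsilon$ and \emph{some} parity $\bar h$, i.e.\ exactly the paper's Equation \eqref{either3}, and neither of your two repairs works. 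For the sign: by Definition \ref{connection} (condition 3 only requires landing in $\{\pm\beta\phi^{-m}\}$) a connection from $\alpha_0$ to $\gamma$ is literally the same object as a connection from $\alpha_0$ to $-\gamma$, so ``running the argument with target $-\gamma$'' produces no new information and cannot force the opposite sign. For the parity: you say the parities are ``forced at each step,'' which is precisely why you end up with one parity $\bar h$ at the end, not both; when some root lies in $\Lambda_{\bar 0}\cap\Lambda_{\bar 1}$ nothing in your sketch gets the second component into $I$.

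These two issues are where essentially all of the paper's work lies, and they are resolved differently. The sign is recovered \emph{after} first putting $H$ into $I$: one sign per root already gives $[{\frak L}_{\epsilon\alpha},{\frak L}_{-\epsilon\alpha}]\subset[I,{\frak L}]\subset I$, hence $H=\sum_\alpha[{\frak L}_\alpha,{\frak L}_{-\alpha}]\subset I$, and then $[H_{\bar 0},{\frak L}_{-\epsilon\alpha}]={\frak L}_{-\epsilon\alpha}\subset I$ by maximal length; note your own deduction ``$H\subset[I,I]$'' also only needs one sign, so the sign problem is a red herring once the order of steps is changed. The parity problem is the hard one: when $\Lambda_{\bar 0}\cap\Lambda_{\bar 1}\neq\emptyset$ the paper proves the key observation \eqref{obser} (if both components ${\frak L}_{\alpha,\bar 0}\oplus{\frak L}_{\alpha,\bar 1}$ of a single such root land in $I$ then $I={\frak L}$) and then uses the hypothesis $H=\sum_\gamma[{\frak L}_\gamma,{\frak L}_{-\gamma}]$ to find $\gamma$ with $[[{\frak L}_{\gamma,\bar n},{\frak L}_{-\gamma,\bar n}],{\frak L}_{\epsilon\alpha\phi^{-m},\bar h}]\neq 0$, the super Hom-Jacobi identity, and a case analysis (on whether ${\frak L}_{-\kappa\gamma,\bar n+\bar 1}$ vanishes and on the value of $\bar h+\bar m$) to manufacture the missing parity component. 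Your proposal correctly flags this ``two-fold bookkeeping'' as the extra difficulty of the super case but does not actually carry it out; as written, the argument proves only \eqref{either3} and then assumes the rest.
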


\begin{proof}
The necessary implication is Theorem \ref{teo1}-2. To prove the
converse, consider $I$ a nonzero ideal of ${\frak L}$. By Lemmas \ref{navi}, \ref{navi2} and Equation \eqref{max} we can write $$I = \Bigl(\bigl(I_{\bar 0} \cap H_{\bar 0}\bigr) \oplus \bigl(\bigoplus\limits_{\alpha \in \Lambda^I_{\bar 0}} {\frak L}_{\alpha, {\bar 0}} \bigr)\Bigr) \oplus \Bigl(\bigl(I_{\bar 1} \cap H_{\bar 1}\bigr) \oplus \bigl(\bigoplus\limits_{\alpha \in \Lambda^I_{\bar 1}} {\frak L}_{\alpha, {\bar 1}} \bigr)\Bigr)$$ with $\Lambda_{\bar i}^I \subset \Lambda_{\bar i},$ for ${\bar i} \in {\hu Z}_2 $ and some $\Lambda_{\bar i}^I \neq
\emptyset$ by Lemma \ref{lema4}. Hence, we can take $\alpha_0 \in \Lambda_{\bar i}^I$ being so
\begin{equation}\label{I}
0 \neq {\frak L}_{\alpha_0, {\bar i}} \subset I,
\end{equation}
thus
\begin{equation}\label{I2}
\{{\frak L}_{\alpha_0 \phi^{z},\bar{i}}: z \in {\hu Z}\} \subset I
\end{equation}
because $\phi(I)=I$ together with Lemma \ref{lema1} and Remark \ref{rmk1a} allow us to assert that
\begin{equation}\label{casifinal}
\hbox{if $\alpha \in \Lambda^I_{\bar i}$ then $\{\alpha \phi^{z}: z \in {\hu Z}\}  \subset \Lambda^I_{\bar i}$}.
\end{equation}
Now, let us take any $\beta \in \Lambda \setminus \{\alpha_0\phi^{z},-\alpha_0\phi^{z} : z \in {\hu Z}\}$. Since $\alpha_0$ and $\beta$ are connected we have a connection $\{\alpha_1,\alpha_2,\dots,\alpha_k\} \subset \Lambda, k \geq 2$, from $\alpha_0$ to $\beta$ satisfying:
\begin{enumerate}
\item[{\rm 1.}] $\alpha_1 = \alpha_0\phi^{-n}$ for some $n \in {\mathbb N}_0$.

\item[{\rm 2.}] $\alpha_1\phi^{-1} + \alpha_2\phi^{-1} \in \Lambda$\\
$\alpha_1\phi^{-2} + \alpha_2\phi^{-2} + \alpha_3\phi^{-1} \in \Lambda$\\
$\hspace*{2cm} \vdots$\\
$\alpha_1\phi^{-i} + \alpha_2\phi^{-i} + \alpha_3\phi^{-i+1}+
\cdots + \alpha_{i+1}\phi^{-1} \in \Lambda$\\
$\hspace*{2cm} \vdots$\\
$\alpha_1\phi^{-k+2} + \alpha_2\phi^{-k+2} + \alpha_3\phi^{-k+3} + \cdots + \alpha_i\phi^{-k+i} + \cdots + \alpha_{k-1}
\phi^{-1} \in \Lambda$

\item[{\rm 3.}] $\alpha_1\phi^{-k+1} + \alpha_2\phi^{-k+1} + \alpha_3\phi^{-k+2} + \cdots + \alpha_i\phi^{-k+i-1} + \cdots + \alpha_k\phi^{-1} = \epsilon \beta\phi^{-m}$ for some $ m
\in {\mathbb N}_0$ and $\epsilon \in \{1,-1\}.$
\end{enumerate}

\noindent Consider $\alpha_1 = \alpha_0 \in \Lambda_{\bar i}$. Since $\alpha_2 \in \Lambda$ it follows ${\frak L}_{\alpha_2,\bar{j}} \neq 0,$ for some ${\bar j} \in {\hu Z}_2$, and so $\alpha_2 \in \Lambda_{{\bar j}}$. We have $\alpha_1 \in \Lambda_{{\bar i}}$ and $\alpha_2 \in \Lambda_{{\bar j}}$ such that $\alpha_1\phi^{-1} + \alpha_2\phi^{-1} \in \Lambda_{{\bar i} + \bar{j}}$ by Remark \ref{rmk1a}. The root-multiplicativity and maximal length of ${\frak L}$ allow us to assert $0 \neq [{\frak L}_{\alpha_1,\bar{i}} , {\frak L}_{\alpha_2,\bar{j}}] = {\frak L}_{\alpha_1 \phi^{-1}+ \alpha_2 \phi^{-1},\bar{i}+\bar{j}}.$ As consequence of Equation \eqref{I} we get $$0 \neq {\frak L}_{\alpha_1 \phi^{-1}+\alpha_2 \phi^{-1},\bar{i}+\bar{j}} \subset I.$$
A similar argument applied to $\alpha_1\phi^{-1}+\alpha_2
\phi^{-1}, \alpha_3$ and $$(\alpha_1\phi^{-1} + \alpha_2\phi^{-1})\phi^{-1} + \alpha_3\phi^{-1} = \alpha_1\phi^{-2} + \alpha_2\phi^{-2} + \alpha_3\phi^{-1}$$ gives us $0 \neq {\frak L}_{\alpha_1 \phi^{-2}+\alpha_2 \phi^{-2}+ \alpha_3 \phi^{-1},\bar{l}} \subset I,$ for certain $\bar{l} \in \mathbb{Z}_2.$ We can follow this process with the connection $\{\alpha_1,\alpha_2,\dots,\alpha_k\}$ to get $$0 \neq {\frak L}_{\alpha_1\phi^{-k+1}+ \alpha_2 \phi^{-k+1} + \alpha_3\phi^{-k+2} + \cdots + \alpha_i\phi^{-k+i-1} + \cdots+\alpha_k\phi^{-1},\bar{h}} \subset I,$$ for some $\bar{h} \in \mathbb{Z}_2,$ and then
\begin{equation}\label{either2}
\hbox{$0 \neq {\frak L}_{\epsilon\beta\phi^{-m}, \bar{h}} \subset I,$ for certain $\epsilon \in \{1,-1\}$}
\end{equation}
for any $\beta \in \Lambda \setminus \{\alpha_0\phi^{z},-\alpha_0\phi^{z} : z \in {\hu Z}\}$ and some $m \in \mathbb{N}_0.$ In general, from Equations \eqref{I2} and \eqref{either2} we get for any $\alpha \in \Lambda$
\begin{equation*}
\hbox{either $\{{\frak L}_{\alpha \phi^z,\bar{i}} : z \in {\hu Z}\} \subset I$ or $ \{ {\frak L}_{ -\alpha \phi^z,\bar{i}}: z \in {\hu Z}\} \subset I,$ for a fixed $\bar{i} \in \mathbb{Z}_2$.}
\end{equation*}
which can be reformulated by given any $\alpha \in \Lambda$:
\begin{equation}\label{either3}
\hbox{either $\{\alpha \phi^z : z \in \mathbb{Z}\}$ or $\{-\alpha \phi^z : z \in \mathbb{Z}\}$ is contained in $\Lambda^I_{\bar i},$ for some $\bar{i} \in \mathbb{Z}_2$.}
\end{equation}

From here, if we have $$\Lambda_{\bar 0} \cap \Lambda_{\bar 1} = \emptyset,$$ Equation \eqref{either3} shows that for any $\alpha \in \Lambda$ we have
\begin{equation}\label{equa1}
{\frak L}_{\epsilon \alpha} \subset I
\end{equation}
for some $\epsilon \in \{1,-1\}$. Since the expression of $H$ we get
\begin{equation}\label{equa2}
H \subset I.
\end{equation}
Now, for $-\epsilon\alpha \in \Lambda,$ the facts $-\epsilon\alpha \neq 0, H \subset I$ and the maximal length of ${\frak L}$ show
\begin{equation}\label{equa3}
[H_{\bar 0},{\frak L}_{-\epsilon\alpha}] = {\frak L}_{-\epsilon \alpha} \subset I.
\end{equation}
From Equations \eqref{equa1}-\eqref{equa3} we conclude $I={\frak L}$.

Therefore, it just remains to study the case in which there exists $\alpha \in \Lambda$ such that $$\alpha \in \Lambda_{\bar 0} \cap \Lambda_{\bar 1}.$$ We begin with the following observation.
\begin{equation}\label{obser}
\hbox{If there exists some $\alpha \in \Lambda_{\bar 0} \cap
\Lambda_{\bar 1}$ satisfying ${\frak L}_{\alpha, {\bar 0}} \oplus {\frak L}_{\alpha, {\bar 1}} \subset I$ then $I = {\frak L}$.}
\end{equation}
Indeed, by arguing as above with the fact that ${\frak L}$ has all its nonzero roots connected from ${\frak L}_{\alpha, {\bar 0}}$,  we get that for any $\beta \in \Lambda \setminus \{\alpha,-\alpha\}, 0 \neq {\frak L}_{\epsilon\beta,{\bar h}} \subset I$ for some $\epsilon \in \{1,-1\}$ and some ${\bar h} \in {\hu Z}_2$.
If we argue similarly from ${\frak L}_{\alpha, {\bar 1}}$ we obtain $0 \neq {\frak L}_{\epsilon \beta,{\bar h} + {\bar 1}} \subset I$. So ${\frak L}_{\epsilon \beta} \subset I.$
From here, as $H = \sum\limits_{\alpha \in \Lambda}[{\frak L}_{\alpha}, {\frak L}_{-\alpha}]$ we have
\begin{equation}\label{equa4}
H \subset I.
\end{equation}
Given now any $\gamma \in \Lambda$, the facts $\gamma \neq 0$,
$H \subset I$ and the maximal length of ${\frak L}$ show
$$[H_{\bar 0},{\frak L}_{\gamma,\bar{i}}] = {\frak L}_{\gamma,\bar{i}} \subset I$$ for any $\bar{i} \in \mathbb{Z}_2.$ Therefore
\begin{equation}\label{equa5}
[H_{\bar 0},{\frak L}_{\gamma}] = {\frak L}_{\gamma} \subset I.
\end{equation}
From Equations \eqref{equa4} and \eqref{equa5} we conclude $I = {\frak L}$.

Let us return to our $\alpha \in \Lambda$ such that $\alpha \in
\Lambda_{\bar 0} \cap \Lambda_{\bar 1}$. Taking into account
Equation \eqref{either3} $\epsilon \alpha\phi^{-m} \in \Lambda,$ for certain $\epsilon \in \{1,-1\}$ and some $m \in \mathbb{N}_0,$ then $[H_{\bar 0}, {\frak L}_{\epsilon \alpha\phi^{-m}, {\bar h}}]\neq 0$ for fixed $\bar{h} \in \mathbb{Z}_2$. Since $H_{\bar 0} = \sum_{\gamma\in \Lambda}([{\frak L}_{\gamma, {\bar 0}}, {\frak L}_{-\gamma, {\bar 0}}] + [{\frak L}_{\gamma, {\bar 1}}, {\frak L}_{-\gamma, {\bar 1}}])$, there exists $\gamma \in \Lambda$ such that either \newline $[[{\frak L}_{\gamma, {\bar 0}}, {\frak L}_{-\gamma, {\bar 0}}], {\frak L}_{\epsilon \alpha\phi^{-m}, {\bar h}}] \neq 0$ or $[[{\frak L}_{\gamma, {\bar 1}}, {\frak L}_{-\gamma, {\bar 1}}], {\frak L}_{\epsilon \alpha\phi^{-m}, {\bar h}}] \neq 0.$ That is, by Lemma \ref{lema1} and Remark \ref{remarkr} is $\phi({\frak L}_{\epsilon \beta\phi^{-m+1},{\bar h}}) = {\frak L}_{\epsilon \alpha\phi^{-m},{\bar h}},$ and so
\begin{equation}\label{paracero}
[[{\frak L}_{\gamma, {\bar n}}, {\frak L}_{-\gamma, {\bar n}}], \phi({\frak L}_{\epsilon \alpha\phi^{-m+1},{\bar h}})]\neq 0
\end{equation}
for some ${\bar n} \in {\hu Z}_2$ and fixed $\bar{h} \in {\hu Z}_2, m \in \mathbb{N}_0$. By super Hom-Jacobi identity $$\hbox{either $[{\frak L}_{\epsilon \alpha\phi^{-m+1},{\bar h}},{\frak L}_{\gamma, {\bar n}}] \neq 0$ or $[{\frak L}_{-\gamma, {\bar n}}, {\frak L}_{\epsilon \alpha\phi^{-m+1},{\bar h}}] \neq 0$}$$ and so ${\frak L}_{\gamma + \epsilon \alpha\phi^{-m+1}, {\bar n}+ {\bar h}} \neq 0$ or ${\frak L}_{-\gamma+ \epsilon \alpha\phi^{-m+1}, {\bar n}+ {\bar h}} \neq 0$. That is, by Equation \eqref{either2},
\begin{equation}\label{final}
0 \neq {\frak L}_{\kappa \gamma+ \epsilon \alpha\phi^{-m+1}, {\bar n}+ {\bar h}} \subset I
\end{equation}
for some $\kappa \in \{1,-1\}$.

Let us distinguish two possibilities, if ${\frak L}_{-\kappa \gamma, {\bar n} +{\bar 1}} \neq 0$, we get by the root-multiplicativity and maximal length of ${\frak L}$ that $$0 \neq [{\frak L}_{\kappa \gamma+ \epsilon \alpha\phi^{-m+1},{\bar n} + {\bar h}}, {\frak L}_{-\kappa \gamma, {\bar n}+{\bar 1}}] = {\frak L}_{\epsilon \alpha\phi^{-m+1}, {\bar h}+{\bar 1}} \subset I.$$ From here, Equations \eqref{either3} and \eqref{obser} give us $I = {\frak L}$.

If ${\frak L}_{-\kappa \gamma, {\bar n} + {\bar 1}}= 0$, as by symmetry of $\Lambda$ is $-\epsilon \alpha\phi^{-m} \in \Lambda,$ and by Lemma \ref{lema2} we also obtain $-\epsilon \alpha\phi^{-m+1} \in \Lambda,$ so ${\frak L}_{-\epsilon \alpha\phi^{-m+1}, {\bar m}} \neq 0$ for some ${\bar m} \in {\hu Z}_2$. By Equation \eqref{final} and the root-multiplicativity and maximal length of ${\frak L}$ we obtain
\begin{equation}\label{dieciseis}
0 \neq [{\frak L}_{\kappa \gamma+ \epsilon \alpha\phi^{-m+1}, {\bar n} +{\bar h}}, {\frak L}_{-\epsilon \alpha\phi^{-m+1}, {\bar m}}] = {\frak L}_{\kappa \gamma, {\bar n}+{\bar h}+{\bar m}} \subset I.
\end{equation}

If ${\bar h}+{\bar m}={\bar 0}$, taking into account that Equation \eqref{paracero} gives us $[[{\frak L}_{\gamma, {\bar n}}, {\frak L}_{-\gamma, {\bar n}}], {\frak L}_{\epsilon \alpha\phi^{-m},{\bar h}}]\neq 0,$ that is,
$$\alpha([{\frak L}_{\gamma, {\bar n}}, {\frak L}_{-\gamma, {\bar n}}])\neq 0,$$
Equation \eqref{dieciseis} and the fact $\alpha \in \Lambda_{\bar 0} \cap \Lambda_{\bar 1}$ let us assert $0 \neq [[{\frak L}_{\gamma, {\bar n}}, {\frak L}_{-\gamma, {\bar n}}], {\frak L}_{\alpha, {\bar p}}] = {\frak L}_{\alpha, {\bar p}}\subset I$ for any ${\bar p} \in {\hu Z}_2$. From here, Equation \eqref{obser} gives us $I = {\frak L}$.

If ${\bar h}+{\bar m}={\bar 1}$ we have by Equation \eqref{dieciseis} and the root-multiplicativity and
maximal length of ${\frak L}$ that $0 \neq [{\frak L}_{\kappa \gamma, {\bar n} + {\bar 1}}, {\frak L}_{\epsilon \alpha\phi^{-m+1}, {\bar h}}] = {\frak L}_{\kappa \gamma + \epsilon \alpha\phi^{-m+1},{\bar n} + {\bar h}+ {\bar 1}}\subset I$. From here, Equations \eqref{final} and \eqref{obser} give us $I = {\frak L}$. Consequently ${\frak L}$ is simple.
\end{proof}

\begin{theorem}
Let ${\frak L}$ be a split regular Hom-Lie superalgebra of maximal length, root multiplicative satisfying $\mathcal{Z}({\frak L})=0$ and $H = \sum_{\alpha \in \Lambda}[{\frak L}_{\alpha}, {\frak L}_{-\alpha}].$ Then ${\frak L}$ is the direct sum of the family of its minimal ideals, each one being a simple split regular Hom-Lie superalgebra having all its nonzero roots connected.
\end{theorem}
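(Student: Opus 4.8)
The plan is to deduce the statement from the decomposition of Corollary \ref{coro1} and the simplicity criterion of Theorem \ref{teo3}, so that essentially no new computation is needed. First I would note that the hypotheses ${\mathcal Z}({\frak L}) = 0$ and $H = \sum_{\alpha \in \Lambda}[{\frak L}_{\alpha}, {\frak L}_{-\alpha}]$ are precisely those of Corollary \ref{coro1}; applying it gives the orthogonal direct sum
$${\frak L} = \bigoplus_{[\alpha] \in \Lambda/\sim}{\frak L}_{[\alpha]}, \qquad [{\frak L}_{[\alpha]}, {\frak L}_{[\gamma]}] = 0 \ \ \hbox{whenever} \ \ [\alpha] \neq [\gamma],$$
where each ${\frak L}_{[\alpha]} = H_{[\alpha]} \oplus V_{[\alpha]}$ is the ideal of Theorem \ref{teo1}. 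The whole problem thus reduces to showing that every ${\frak L}_{[\alpha]}$ is simple, after which minimality follows at once.

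Second, I would verify that each ${\frak L}_{[\alpha]}$ is itself a split regular Hom-Lie superalgebra meeting all the hypotheses of Theorem \ref{teo3}, with $H_{[\alpha]}$ as MAGSA and $[\alpha]$ as root system. Since $\phi({\frak L}_{[\alpha]}) = {\frak L}_{[\alpha]}$ by Proposition \ref{pro2}, the automorphism restricts, and the decomposition ${\frak L}_{[\alpha]} = H_{[\alpha]} \oplus (\bigoplus_{\beta \in [\alpha]}{\frak L}_{\beta})$ exhibits the split structure. Maximal length and root-multiplicativity transfer to ${\frak L}_{[\alpha]}$ because its graded root spaces ${\frak L}_{\beta, {\bar i}}$ are literally those of ${\frak L}$, while the required equality $H_{[\alpha]} = \sum_{\beta \in [\alpha]}[{\frak L}_{\beta}, {\frak L}_{-\beta}]$ holds by the very definition of $H_{[\alpha]}$. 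That all nonzero roots of ${\frak L}_{[\alpha]}$ are mutually connected follows from $[\alpha]$ being a single $\sim$-class, symmetric by Lemma \ref{elprimero}, together with the observation (already used in Proposition \ref{pro2}) that the intermediate roots of any connection between two elements of $[\alpha]$ again lie in $[\alpha]$, so that every connection chain stays inside ${\frak L}_{[\alpha]}$.

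Third, I would establish ${\mathcal Z}({\frak L}_{[\alpha]}) = 0$: if $v \in {\frak L}_{[\alpha]}$ satisfies $[v, {\frak L}_{[\alpha]}] = 0$, then the direct sum ${\frak L} = \bigoplus_{[\gamma]}{\frak L}_{[\gamma]}$ and the orthogonality $[{\frak L}_{[\alpha]}, {\frak L}_{[\gamma]}] = 0$ for $[\gamma] \neq [\alpha]$ give $[v, {\frak L}] = 0$, whence $v \in {\mathcal Z}({\frak L}) = 0$. With every hypothesis checked, Theorem \ref{teo3} applies and yields that each ${\frak L}_{[\alpha]}$ is simple.

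Finally, I would conclude the minimality assertion: any nonzero ideal $I$ of ${\frak L}$ with $I \subset {\frak L}_{[\alpha]}$ is also an ideal of ${\frak L}_{[\alpha]}$, since $[I, {\frak L}_{[\alpha]}] \subset [I, {\frak L}] \subset I$ and $\phi(I) = I$, so simplicity of ${\frak L}_{[\alpha]}$ forces $I = {\frak L}_{[\alpha]}$; hence each simple ideal ${\frak L}_{[\alpha]}$ is minimal, and ${\frak L}$ is the direct sum of this family of minimal ideals, each a simple split regular Hom-Lie superalgebra with all its nonzero roots connected. The step I expect to cost the most care is the second one: confirming that $H_{[\alpha]}$ is genuinely maximal abelian graded in ${\frak L}_{[\alpha]}$ and that restricting the functionals $\beta \in [\alpha]$ to $(H_{[\alpha]})_{\bar 0}$ neither annihilates them nor identifies distinct roots, so that the split structure, maximal length and root-multiplicativity transfer without degeneration.
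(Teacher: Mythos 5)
Your proposal is correct and follows essentially the same route as the paper: apply Corollary \ref{coro1} to obtain ${\frak L} = \bigoplus_{[\alpha]}{\frak L}_{[\alpha]}$, check that each ${\frak L}_{[\alpha]}$ inherits the split, regular, maximal-length, root-multiplicative and centerless properties (with all roots of $[\alpha]$ connected through roots in $[\alpha]$), and invoke Theorem \ref{teo3} to get simplicity, hence minimality. Your write-up is in fact somewhat more careful than the paper's about the inheritance of the hypotheses and the minimality step, but the argument is the same.
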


\begin{proof}
By Corollary \ref{coro1}, ${\frak L} = \bigoplus_{[\alpha] \in \Lambda/\sim} {\frak L}_{[\alpha]}$ is the direct sum of the  ideals $${\frak L}_{[\alpha]} = H_{[\alpha]} \oplus V_{[\alpha]} = \Bigl(\sum\limits_{\beta \in [\alpha]} [{\frak L}_{\beta},{\frak L}_{-\beta}]\Bigr) \oplus \Bigl(\bigoplus\limits_{\beta \in [\alpha]} {\frak L}_{\beta}\Bigr)$$ having any ${\frak L}_{[\alpha]}$ its root system $[\alpha]$ with all of its roots connected. Taking into account the facts $[\alpha] = -[\alpha]$ and $[{\frak L}_{[\alpha]}, {\frak L}_{[\alpha]}] \subset {\frak L}_{[\alpha]}$ (see Proposition \ref{pro2}-1) we easily deduce that $[\alpha]$ has all of its roots connected through roots in $[\alpha]$. We also have that any of the ${\frak L}_{[\alpha]}$ is root-multiplicative as consequence of the root-multiplicativity of ${\frak L}$ and trivially is regular. Finally, is clear that ${\frak L}_{[\alpha]}$ is of maximal length and ${\mathcal Z}_{{\frak L}_{[\alpha]}}({\frak L}_{[\alpha]})=0$ (where ${\mathcal
Z}_{{\frak L}_{[\alpha]}}({\frak L}_{[\alpha]})$ denotes de center of ${\frak L}_{[\alpha]}$ in ${\frak L}_{[\alpha]}$) as consequence of $[{\frak L}_{[\alpha]},{\frak L}_{[\gamma]}]=0$ if $[\alpha] \neq [\gamma]$ (Theorem \ref{teo2}) and ${\mathcal Z}({\frak L}) = 0$. We can apply Theorem \ref{teo3} to any ${\frak L}_{[\alpha]}$ so as to conclude ${\frak L}_{[\alpha]}$ is simple. It is clear that the decomposition ${\frak L} =\bigoplus_{[\alpha] \in \Lambda/\sim} {\frak L}_{[\alpha]}$ satisfies the assertions of the theorem.
\end{proof}

\begin{remark}\label{Ree}
We finally note that the results in the present paper also hold
for non regular Hom-Lie superalgebras ${\frak L}$ if we introduce the class of split non regular Hom-Lie superalgebras by asking for $H$ not only the condition of being a maximal abelian subalgebra of ${\frak L}$ but also the one of satisfying that $\phi|_H$ is a linear bijection from $H$ onto $H.$
\end{remark}

\medskip

\begin{example}
Let us consider the real  ${\mathbb Z}_2$-graded vector space
${\frak L}={\frak L}_{\bar 0} \oplus {\frak L}_{\bar 1}$,  with
basis $$\{e_1,e_2\} \cup \{h_n,x_n,y_n: n \in {\mathbb N}, n \geq 2\} $$ of ${\frak L}_{\bar 0}$ and  $$\{e_3\}\cup \{f_n,g_n: n \in {\mathbb N}, n \geq 2\} $$
of ${\frak L}_{\bar 1}$ given in Example \ref{example1}. Here  the nonzero products on these elements
were induced  by the relations:
$$[e_2,e_1]=e_1, \hspace{0.3cm} [h_n,x_n]=2n^2x_n, \hspace{0.3cm}
[h_n,y_n]=- \frac{2}{n^2}y_n,$$ $$[x_n,y_n]=h_n,\hspace{0.3cm}
[y_n,g_n]=\frac{1}{n}f_n, \hspace{0.3cm}[x_n,f_n]={n}g_n,$$
$$[h_n,f_n]=-\frac{1}{n}f_n,\hspace{0.3cm} [h_n,g_n]={n}g_n,
 \hspace{0.3cm}[g_n,f_n]=h_n,\hspace{0.3cm}  [g_n,g_n]=-2n^2x_n,$$
 $$[f_n,f_n]=\frac{2}{n^2}y_n.$$

  If we now consider  the  superspace homomorphism $$\phi^{\prime}: {\frak L} \to {\frak L}$$ defined by

  $$\hbox{$\phi^{\prime}(e_1)=e_1$,  $\phi^{\prime}(e_2)=e_2$, $\phi^{\prime}(e_3)=0$ and }$$
  $$\hbox{$\phi^{\prime}(h_n)=h_n$, $\phi^{\prime}(x_n)=n^2 x_n$, $\phi^{\prime}(y_n)=\frac{1}{n^2} y_n$, $\phi^{\prime}(f_n)=\frac{1}{n} f_n$, $\phi^{\prime}(g_n)=n g_n$ }$$
   for any $n \geq 2$,
  we have that
  ${\frak L}={\frak L}_{\bar 0} \oplus {\frak L}_{\bar
 1}$ also becomes a split Hom-Lie superalgebra, with a root spaces decomposition as in Equation (\ref{sep1}),  which is not regular since $\phi^{\prime}$ is not an automorphism of $({\frak L}, [\cdot, \cdot])$. However, $\phi^{\prime}|_H$ is a linear bijection from $H$ onto $H.$ From here,   Remark \ref{Ree} allows us to assert that ${\frak L}$ admits an analogous decomposition as sum of ideals as the one given   in Equation (\ref{sep2}).

\end{example}

\bigskip

{\bf Acknowledgment.} We would like to thank  the referee  for the detailed reading of this work and for the suggestions which have improved the final version of the same.

\end{document}